\newcommand{\FEt}{\textbf{\textup{FÉt}}}
\newcommand{\cocart}{\textit{cocart}}
\newcommand{\Vect}{\categ{Vect}}
\newcommand{\leftnat}[1]{\vphantom{#1}_{\natural}\mskip-1mu{#1}}
\newcommand{\rightnat}[1]{{#1}^{\natural}}
\newcommand{\ultimes}{\,\underline{\times}\,}
\newcommand{\vop}{\textit{vop}}
\title[Expos\'e I -- Elements of parametrized higher category theory]{Parametrized higher category theory and higher algebra: Expos\'e I -- Elements of parametrized higher category theory}
\author{Clark Barwick}
\address{Department of Mathematics, Massachusetts Institute of Technology, 77 Massachusetts Avenue, Cambridge, MA 02139-4307, USA}
\email{clarkbar@math.mit.edu}
\author{Emanuele Dotto}
\address{Department of Mathematics, Massachusetts Institute of Technology, 77 Massachusetts Avenue, Cambridge, MA 02139-4307, USA}
\email{dotto@math.mit.edu}
\author{Saul Glasman}
\address{University of Minnesota, School of Mathematics, Vincent Hall, 206 Church St. SE, Minneapolis, MN 55455, USA}
\email{saulglasman0@gmail.com}
\author{Denis Nardin}
\address{Department of Mathematics, Massachusetts Institute of Technology, 77 Massachusetts Avenue, Cambridge, MA 02139-4307, USA}
\email{nardin@math.mit.edu}
\author{Jay Shah}
\address{Department of Mathematics, Massachusetts Institute of Technology, 77 Massachusetts Avenue, Cambridge, MA 02139-4307, USA}
\email{jshah@math.mit.edu}
\begin{document}

\begin{abstract} We introduce the basic elements of the theory of \emph{parametrized $\infty$-categories} and functors between them. These notions are defined as suitable fibrations of $\infty$-categories and functors between them. We give as many examples as we are able at this stage. Simple operations, such as the formation of opposites and the formation of functor $\infty$-categories, become slightly more involved in the parametrized setting, but we explain precisely how to perform these constructions. All of these constructions can be performed explicitly, without resorting to such acts of desperation as straightening. The key results of this Expos\'e are: (1) a universal characterization of the $T$-$\infty$-category of $T$-objects in any $\infty$-category, (2) the existence of an internal Hom for $T$-$\infty$-categories, and (3) a parametrized Yoneda lemma.
\end{abstract}

\maketitle

\tableofcontents

\section{Parametrized $\infty$-categories} Suppose $G$ a finite group. At a minimum, a $G$-$\infty$-category should consist of an $\infty$-category $C$ along with a weak action $\rho$ of $G$. In particular, for every element $g\in G$, one should have an equivalence $\rho(g)\colon\equivto{C}{C}$, and for every $g,h\in G$, one should have a natural equivalence $\rho(gh)\simeq\rho(g)\circ\rho(h)$, and these natural equivalences should then in turn be constrained by an infinite family of homotopies that express the higher associativity of $\rho$.

Following \cite{BarShah}, we will want to encode this data very efficiently as a cocartesian fibration $\fromto{C}{BG}$. However, as we emphasize in the introduction \cite{Exp0}, a $G$-$\infty$-category should contain yet more information. For example, one wishes also to retain the information of the ``honest'' (not homotopy) fixed-point $\infty$-category $C^H$ of $C$ for any subgroup $H\leq G$ along with the conjugation action equivalence $c_g\colon\equivto{C^H}{C^{gHg^{-1}}}$. All of these data should fit together via the obvious restriction functors. These should be data \emph{in addition} to a cocartesian fibration to $BG$.

It is a classical result of Tony Elmendorf and Jim McClure that to give a $G$-equivariant \emph{space}, one needs only to give the data of the honest fixed-point spaces, the residual actions, and their compatibilities with restriction. That is, again falling in line with the approach of \cite{BarShah}, the homotopy theory of $G$-spaces can be identified with the homotopy theory of left fibrations with target the opposite of the orbit category of $G$.

In the context of more general homotopy theories, Marc Stephan \cite{MStephan} and Julie Bergner \cite{Bergequiv} have proved versions of this theorem that exhibit equivalences of homotopy theories between categories enriched in $G$-spaces and cocartesian fibrations with target $\OO_G^{\op}$. So we may simply take the latter as the \emph{definition} of a $G$-equivariant homotopy theory, and that is exactly what we will do.

\begin{dfn}\label{dfn:Gcat} Suppose $G$ a profinite group. Write $\OO_G$ for the ordinary category of transitive, continuous $G$-sets. In particular, its objects are, up to equivalence, the orbits $G/H$ in which the stabilizer $H\leq G$ is an open subgroup.

A \emph{$G$-equivariant $\infty$-category} or, more briefly, a \emph{$G$-$\infty$-category} $C$ is a pair $(C,p)$ consisting of an $\infty$-category $C$ and a cocartesian fibration
\begin{equation*}
p\colon\fromto{C}{\OO_G^{\op}}.
\end{equation*}
The cocartesian fibration $p$ will be called the \emph{structure map for $C$}. From time to time, we will refer simply to $C$ alone as a $G$-$\infty$-category, leaving the structure map implicit.

If $(C,p)$ and $(D,q)$ are two $G$-$\infty$-categories, then a \emph{$G$-functor} $\fromto{(C,p)}{(D,q)}$ is a functor $\fromto{C}{D}$ over $\OO_G^{\op}$ that carries $p$-cocartesian edges to $q$-cocartesian edges.

A \emph{$G$-space} is a $G$-$\infty$-category $(C,p)$ in which every edge of $C$ is  $p$-cocartesian. In particular, the structure map $p$ of a $G$-$\infty$-category $C$ is a left fibration just in case $(C,p)$ is a $G$-space. A $G$-functor between $G$-spaces will simply be called a \emph{$G$-map}.
\end{dfn}

\begin{nul} If a $G$-$\infty$-category $C$ is the nerve of an ordinary category, then this ordinary category is a Grothendieck opfibration that corresponds to a \emph{categorical coefficient system} in the sense of Blumberg--Hill \cite{MR3406512}.
\end{nul}

As we have described in the Introduction, we have cause to contemplate more general bases. This leads us to the following definition.
\begin{dfn} Suppose $T$ an $\infty$-category. Then a \emph{$T$-parametrized $\infty$-category} -- or more briefly, a \emph{$T$-$\infty$-category} -- is a pair $(C,p)$ consisting of an $\infty$-category $C$ and a cocartesian fibration $p\colon\fromto{C}{T^{\op}}$.

Suppose $(C,p)$ and $(D,q)$ two $T$-$\infty$-categories. A \emph{$T$-parametrized functor} -- or more briefly, a \emph{$T$-functor} -- $\fromto{(C,p)}{(D,q)}$ is a functor $\fromto{C}{D}$ over $T^{\op}$ that carries $p$-cocartesian edges to $q$-cocartesian edges.

If $(C,p)$ is a $T$-$\infty$-category whose fibers are all $n$-categories in the sense of \cite[\S 2.3.4]{HTT}, then we will say that $(C,p)$ is a \emph{$T$-$n$-category}.

A \emph{$T$-space} is a $T$-$\infty$-category $(C,p)$ in which every edge of $C$ is $p$-cocartesian. A $T$-functor between $T$-spaces will be called a \emph{$T$-map}.
\end{dfn}

\begin{ntn}\label{ntn:CatinftyT} Suppose $T$ an $\infty$-category. If $(C,p)$ is a $T$-$\infty$-category and $V\in T_0$ is an object, then we write
\[C_V\coloneq p^{-1}\{V\}\]
for the fibre of $p$ over $V$.

The collection of $T$-$\infty$-categories and $T$-functors defines an $\infty$-category $\Cat_{\infty,T}$: precisely, we define $\Cat_{\infty,T}$ as the simplicial nerve of the full simplicial subcategory of $s\Set_{/T^{\op}}^{+}$ spanned by the fibrant objects for the cocartesian model structure.

We also define $\Top_T\subset\Cat_{\infty,T}$ as the full subcategory spanned by the $T$-spaces.

Accordingly, we may employ some notation for marked simplicial sets: for any $T$-$\infty$-cate\-gory $(C,p)$, we write $\leftnat{C}$ for the corresponding marked simplicial set $(C,\iota_{T^{\op}}C)$, where $\iota_{T^{\op}}C\subset C_1$ is the collection of cocartesian edges. If $(C,p)$ and $(D,q)$ are two $T$-$\infty$-categories, then we write
\[\Fun_{T^{\op}}(C,D)\coloneq\Map^{\flat}_{T^{\op}}(\leftnat{C},\leftnat{D}),\]
and we write
\[\Map_{T^{\op}}(C,D)\coloneq\Map^{\sharp}_{T^{\op}}(\leftnat{C},\leftnat{D}).\]
Clearly $\Fun_{T^{\op}}(C,D)$ is an $\infty$-category, and $\Map_{T^{\op}}(C,D)\subset\Fun_{T^{\op}}(C,D)$ is the largest Kan complex contained therein.
\end{ntn}

\begin{exm} When $T=\OO_G$, we write $\Cat_{\infty,G}$, $\Top_{G}$, $\Fun_{G}(C,D)$, and $\Map_{G}(C,D)$ for $\Cat_{\infty,T}$, $\Top_{T}$, $\Fun_{T^{\op}}(C,D)$, and $\Map_{T^{\op}}(C,D)$, respectively.
\end{exm}

\begin{dfn} Suppose $T$ an $\infty$-category. We say that a $T$-functor $\fromto{C}{D}$ is \emph{fully faithful}, \emph{essentially surjective}, or an \emph{equivalence} just in case, for any object $V\in T_0$, the functor $\fromto{C_V}{D_V}$ is so.
\end{dfn}

\begin{nul} Note that a $T$-$\infty$-category $(C,p)$ is classified by an essentially unique functor
\[\CC\colon\fromto{T^{\op}}{\Cat_{\infty}}\]
whose value on an object $V\in T$ is equivalent to the fiber $C_{V}$. If $(C,p)$ is a $T$-space, then $\CC$ lands in the full subcategory $\Top\subset\Cat_{\infty}$. Furthermore, a $T$-functor $\fromto{(C,p)}{(D,q)}$ corresponds to an essentially unique natural transformation $\fromto{\CC}{\DD}$.

In fact, the straightening/unstraightening equivalence yields equivalences of $\infty$-categories
\[\Cat_{\infty,T}\simeq\Fun(T^{\op},\Cat_{\infty})\textrm{\quad and\quad}\Top_{T}\simeq\Fun(T^{\op},\Top).\]
\end{nul}

\begin{nul} We have chosen to define $T$-$\infty$-categories as cocartesian fibrations $p\colon\fromto{D}{T^\op}$ in order maintain certain conventions down the road -- in particular the theory of $\infty$-operads, where cocartesian edges rule the roost. However, this has two disadvantages:
\begin{itemize}
\item Notationally, it can be a bit burdensome to lug around a $T^{\op}$ in subscripts. To address this, we will at times make a global declaration that $S\coloneq T^{\op}$, and use $S$ instead.
\item Additionally, we will at times be presented with a cartesian fibration $q\colon\fromto{D}{T}$, which encodes essentially the same information as a cocartesian fibration $\fromto{C}{T^{\op}}$. In this case, one may apply the dualization construction of \cite{BGN} to $q$ to obtain a cocartesian fibration
\[q^{\vee}\colon\fromto{D^{\vee}}{T^{\op}}\]
that classifies the same functor as $q$.
\end{itemize}
\end{nul}


\section{Examples of parametrized $\infty$-categories} We list a few basic examples of $G$-$\infty$-categories and $T$-$\infty$-categories. We will use these in the sequel.

\begin{exm} Of course $\OO_{G}^{\op}$, which lies over itself via the identity, is a $G$-space. Of course it's the terminal object in the $\infty$-category of $G$-spaces, so we shall write $\ast_G$ for this object.

Similarly, $T^{\op}$ lying over itself via the identity is the terminal $T$-space, and we shall write $\ast_T$ for it.
\end{exm}

\begin{exm} More generally, suppose $C$ an $\infty$-category. Then one has the \emph{constant} $T$-$\infty$-cate\-gory $C\times T^\op$, which lies over $T^\op$ by the projection functor. The functor that classifies it is the constant functor $\fromto{T^\op}{\Cat_\infty}$ at $C$.

Among the constant $T$-$\infty$-categories is the empty $T$-$\infty$-category $\varnothing_T$.
\end{exm}

\begin{exm} If $(C,p)$ and $(D,q)$ are two $T$-$\infty$-categories, then the fiber product
\[C\ultimes D\coloneq(C\times_{T^{\op}}D,(p,q))\]
is the \emph{product} $T$-$\infty$-category.
\end{exm}

\begin{exm}\label{exm:representable} For any object $V\in T_0$, the forgetful functor
\[\fromto{\underline{V}\coloneq(T^{\op})_{V/}}{T^{\op}}\]
exhibits $(T^{\op})_{V/}$ as a $T$-space, which we shall simply denote. The fiber of $\underline{V}$ over an object $W\in T$ is of course $\Map_{T}(W,V)$, and indeed the functor that classifies $\underline{V}$ is the functor represented by $V$. By the Yoneda lemma, for any $T$-$\infty$-category $C$, one has a natural equivalence
\[\Fun_{T^{\op}}(\underline{V},C)\simeq C_V.\]
\end{exm}

\begin{exm} If $T$ is an $\infty$-groupoid (i.e., a Kan complex), then any categorical fibration $\fromto{C}{T^{\op}}$ is automatically a $T$-$\infty$-category, and any $T$-space $\fromto{X}{T^{\op}}$ is a Kan fibration.
\end{exm}

\begin{exm} Suppose $i\colon\fromto{U}{T}$ a functor, and suppose $q\colon\fromto{D}{U^{\op}}$ a $U$-$\infty$-category. Then there exist a $T$-$\infty$-category $p\colon\fromto{C}{T^{\op}}$ and a $U$-functor
\[\fromto{C\times_{T^{\op}}U^{\op}}{D}\]
such that for any object $t\in T_0$, the natural functor
\[\fromto{C_t}{\lim_{s\in U_{t/}}D_s}\]
is an equivalence of $\infty$-categories. The functor that classifies $p$ is right Kan extended from the functor that classifies $q$ along $i$.
\end{exm}

Let's give an explicit construction of the $T$-$\infty$-category $C$ right Kan extended along $i$. We'll need a spot of notation.
\begin{ntn}\label{ntn:laxpb} If $f\colon\fromto{M}{S}$ and $g\colon\fromto{N}{S}$ are two maps of simplicial sets, then let us write
\[M\downarrow_SN\coloneq M\underset{\Fun(\Delta^{\{0\}},S)}{\times}\Fun(\Delta^1,S)\underset{\Fun(\Delta^{\{1\}},S)}{\times}N.\]
A vertex of $M\downarrow_SN$ is thus a vertex $x\in M_0$, a vertex $y\in N_0$, and an edge $\fromto{f(x)}{g(y)}$ in $S_1$. When $M$ and $N$ are $\infty$-categories, the simplicial set $M\downarrow_SN$ is a model for the lax pullback of $f$ along $g$.
\end{ntn}

\begin{exm} Suppose $S$ an $\infty$-category, and suppose $x\in S_0$ a vertex. The simplicial set $\{x\}\downarrow_SS$ is isomorphic to the ``alternative'' undercategory $S^{x/}$ \cite[\S 4.2.1]{HTT}, and, dually, the simplicial set $S\downarrow_S\{x\}$ is isomorphic to the overcategory $S^{/x}$. Consequently, for any two maps $f\colon\fromto{M}{S}$ and $g\colon\fromto{N}{S}$, one has
\[\{x\}\downarrow_SN\cong S^{x/}\times_SN\textrm{\quad and\quad}M\downarrow_S\{x\}\cong M\times_SS^{/x}.\]
\end{exm}

In light of \cite[Prp. 6.5]{BarShah}, we have the following
\begin{prp}\label{prp:RKEofTcats} Suppose $i\colon\fromto{U}{T}$ a functor, and suppose $D$ a $U$-$\infty$-category. Define a functor $\fromto{C}{T^{\op}}$ via the following universal property: for any map $\eta\colon\fromto{K}{T^{\op}}$, we demand a bijection
\[
\Mor_{/T^{op}}(K,C)\cong\Mor_{/U^{\op}}(K\downarrow_{T^{\op}}U^{\op},D),
\]
natural in $\eta$. Then $C$ is a $T$-$\infty$-category, and the natural $U$-functor $\fromto{C\times_{T^{\op}}U^{\op}}{D}$ exhibits $C$ as the right Kan extension of $D$ along $i$.
\end{prp}

\begin{ntn}\label{ntn:finiteTsets} Write $\FF$ for the $1$-category of finite sets. For any $\infty$-category $T$, write $\FF_T$ for the $\infty$-category denoted $\mathscr{P}_{\varnothing}^{\,\FF}(T)$ in \cite[\S 5.3.6]{HTT}. That is, $\FF_T$ is the full subcategory of the $\infty$-category $\Fun(T^\op,\Top)$ spanned by those objects that are equivalent to a finite coproduct of representables. In particular, it enjoys the following universal property: for any $\infty$-category $D$ with all finite coproducts, the Yoneda embedding $j\colon\into{T}{\FF_T}$ induces an equivalence
\[\equivto{\Fun^{\sqcup}(\FF_T,D)}{\Fun(T,D),}\]
where $\Fun^{\sqcup}$ denotes the $\infty$-category of functors that preserve finite coproducts. We call an object of $\FF_T$ a \emph{finite $T$-set}.

The $\infty$-category $\FF_T$ may be described as follows. The objects may be thought of as pairs $(I,X_I)$ consisting of a finite set $I$ and a collection $X_I=\{X_i\}_{i\in I}$ of objects of $T$. The mapping space $\Map_{\FF_T}((J,Y_J),(I,X_I))$ can be identified with the disjoint union
\[\Map_{\FF_T}((J,Y_J),(I,X_I))\simeq\coprod_{\phi\in\Map_{\FF}(J,I)}\prod_{j\in J}\Map_T(Y_j,X_{\phi(j)}).\]

We abuse notation slightly by treating the Yoneda embedding $j\colon\into{T}{\FF_T}$ as if it were an inclusion of a full subcategory. Left Kan extension of the projection $\fromto{T}{\Delta^0}$ along j yields a functor $\mathrm{Orbit}\colon\fromto{\FF_T}{\FF}$ that carries $(I,X_I)$, whence one obtains a decomposition of any finite $T$-set $U$ as
\[U\cong\coprod_{V\in\mathrm{Orbit}(U)}V.\]

Observe that if $T=\OO_G$, then of course $\FF_T=\FF_G$, the category of finite $G$-sets.
\end{ntn}

\begin{exm}\label{discreteGcat} Consider the twisted arrow $\infty$-category $\widetilde{\mathscr{O}}(\FF_T)$ of the $\infty$-category of finite $T$-sets \cite{Aefffib}. This comes equipped with a left fibration
\[\fromto{\widetilde{\mathscr{O}}(\FF_T)}{\FF_T^{\op}\times\FF_T}.\]
Now for any finite $T$-set $U$, one may pull back this left fibration along the inclusion
\[\into{T^{\op}\cong T^{\op}\times\{U\}}{\FF_T^{\op}\times\FF_T}\]
to obtain a $T$-space
\[p_{U}\colon\fromto{\underline{U}}{T^{\op}}.\]
It is easy to see that the functor $\fromto{T^{\op}}{\Top}$ that classifies $p_U$ carries an object $V\in T$ to the space $U(V)\simeq\Map_{\FF_T}(V,U)$. We therefore call this the \emph{discrete $T$-space} attached to $U$.
\end{exm}

The following result is a simple consequence of the Yoneda lemma and the fact that $\underline{U}\simeq\coprod_{V\in\mathrm{Orbit}(U)}\underline{V}$.
\begin{lem}\label{lem:cocartfiboverB} Suppose $(C,p)$ a $T$-$\infty$-category, and suppose $U$ a finite $T$-set. Then the formation of the fibers over each orbit $V\in\mathrm{Orbit}(U)$ induces a trivial fibration
\[\trivfibto{\Fun_{T^{\op}}(\underline{U},C)}{\prod_{V\in\mathrm{Orbit}(U)}C_V}.\]
\end{lem}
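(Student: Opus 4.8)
\medskip
\noindent\textit{Proof proposal.} The plan is to reduce to the case of a single orbit and then to identify the resulting comparison map with the Yoneda equivalence of Example~\ref{exm:representable}, upgraded from an equivalence of $\infty$-categories to a trivial fibration.

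First I would use the decomposition $U\cong\coprod_{V\in\mathrm{Orbit}(U)}V$ to split the $T$-space $\underline U$. For every orbit $W\in T$, the mapping-space formula recalled above gives a natural equivalence $\Map_{\FF_T}(W,U)\simeq\coprod_{V\in\mathrm{Orbit}(U)}\Map_T(W,V)$, a genuine disjoint union since $W$ is an orbit, and it is functorial in $W\in T^{\op}$; so the left fibration of Example~\ref{discreteGcat} splits as an isomorphism $\underline U\cong\coprod_V\underline V$ over $T^{\op}$, and, each $\underline V$ being a left fibration, as an isomorphism $\leftnat{\underline U}\cong\coprod_V\leftnat{\underline V}$ in $s\Set^+_{/T^{\op}}$. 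Hence $\Fun_{T^{\op}}(\underline U,C)\cong\prod_V\Fun_{T^{\op}}(\underline V,C)$ compatibly with passage to the fibres over the orbits, and since an arbitrary product of trivial fibrations is again a trivial fibration, it suffices to treat $U=V$ a single orbit.

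For a single orbit $V$, I would identify the map $\Fun_{T^{\op}}(\underline V,C)\to C_V$ with the one induced by the monomorphism $\{\mathrm{id}_V\}^\flat\hookrightarrow\leftnat{\underline V}$ of marked simplicial sets over $T^{\op}$, namely with $\Map^\flat_{T^{\op}}(\leftnat{\underline V},\leftnat C)\to\Map^\flat_{T^{\op}}(\{\mathrm{id}_V\}^\flat,\leftnat C)=C_V$, where $\mathrm{id}_V$ is the initial object of $\underline V=(T^{\op})_{V/}$ and lies over $V$. Since $\{\mathrm{id}_V\}^\flat\hookrightarrow\leftnat{\underline V}$ is a cofibration and $\leftnat C$ is fibrant for the cocartesian model structure, this map is a categorical fibration, by the standard compatibility of $\Map^\flat_{T^{\op}}(-,-)$ with that model structure (see \cite[\S 3.1]{HTT} and \cite{BarShah}); and it is an equivalence of $\infty$-categories by Example~\ref{exm:representable}, equivalently because $\underline V\to T^{\op}$ is the free cocartesian fibration on the object $V$. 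A categorical fibration that is a categorical equivalence is a trivial fibration, so we are done.

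The main obstacle is exactly this last upgrade in the single-orbit case: obtaining a trivial fibration rather than merely an equivalence of $\infty$-categories. Example~\ref{exm:representable} supplies the equivalence for free, so the real work is to arrange that the comparison map visibly enjoys the relevant lifting property; I would do this by exhibiting it as $\Map^\flat_{T^{\op}}(-,\leftnat C)$ applied to an explicit cofibration and invoking the Joyal-enriched pushout-product axiom for $s\Set^+_{/T^{\op}}$. Alternatively, one may check directly that $\{\mathrm{id}_V\}^\flat\hookrightarrow\leftnat{\underline V}$ is a trivial cofibration, i.e.\ that it exhibits $\leftnat{\underline V}$ as a fibrant replacement of $\{V\}^\flat$, and then apply the simplicial mapping-space machinery. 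Everything else is routine.
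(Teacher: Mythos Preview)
Your proposal is correct and follows exactly the route the paper indicates: the paper offers no detailed proof, stating only that the lemma is ``a simple consequence of the Yoneda lemma and the fact that $\underline{U}\simeq\coprod_{V\in\mathrm{Orbit}(U)}\underline{V}$,'' which is precisely your two-step reduction. Your additional care in upgrading the Yoneda equivalence of Example~\ref{exm:representable} to a trivial fibration via the cofibration $\{\mathrm{id}_V\}^\flat\hookrightarrow\leftnat{\underline V}$ and the enriched model-category properties of $\Map^\flat_{T^{\op}}$ is a welcome elaboration of what the paper leaves implicit.
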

\noindent It will from time to time be helpful for us to select a section of this trivial fibration. Of particular importance is the case in which $U$ is terminal:
\begin{dfn} Suppose $(C,p)$ a $T$-$\infty$-category. Then a \emph{cocartesian section of $C$} is a $T$-functor $\fromto{T^{\op}}{C}$.
\end{dfn}

\noindent In particular, if $T$ has a terminal object $1$ (in which case the corresponding left fibration is an equivalence), then the lemma above yields a trivial fibration from the $\infty$-category of cocartesian sections of a $T$-$\infty$-category $C$ and the fiber $\infty$-category $C_{1}$.

\begin{exm}\label{exm:tCatoffiniteTsets} Pull back the target functor
\[\fromto{\mathscr{O}(\FF_T)=\Fun(\Delta^1,\FF_T)}{\Fun(\Delta^{\{1\}},\FF_T)\cong\FF_T}\]
along the inclusion $\into{T}{\FF_T}$. If the $\infty$-category $\FF_T$ admits fiber products -- that is, precisely if $T$ is an \emph{orbital $\infty$-category} in the sense of \cite{Exp3} -- then the projection $\fromto{T\times_{\FF_T}\mathscr{O}(\FF_T)}{T}$ is a cartesian fibration \cite[Lm. 6.1.1.1]{HTT}.

Consequently, we may dualize to obtain the \emph{$T$-$\infty$-category of finite $T$-sets}
\[\fromto{\underline{\FF}_T\coloneq(T\times_{\FF_T}\mathscr{O}(\FF_T))^{\vee}}{T^{\op}}.\]
Note that the fiber over an object $V$ of $T$ is equivalent to the overcategory $(\FF_T)_{/V}$.
\end{exm}

There are other, more sophisticated examples. Here are a few.

\begin{exm} Suppose $E\supseteq F$ a profinite Galois extension of fields with Galois group $G$. Then we can define a $G$-$1$-category $\underline{\Vect}_{E\supseteq F}$ in the following manner. The objects are pairs $(L,X)$ consisting of a subextension $E\supseteq L\supseteq F$ that is finite over $F$ and a finite-dimensional $L$-vector space $X$. A morphism $\fromto{(L,X)}{(L',X')}$ is a field homomorphism $\into{L}{L'}$ over $F$ (but not necessarily under $E$) along with an $L'$-linear map $\fromto{X\otimes_LL'}{X'}$.

The functor $\goesto{(L,X)}{G/\!\Aut_L(E)}$ is the desired structure functor $\fromto{\Vect_{E\supseteq F}}{\OO_G^{\op}}$, which is easily seen to be a Grothendieck opfibration. This was our motivating example from the beginning of the Introduction.
\end{exm}

\begin{exm} We consider the $\infty$-category $\CAlg^{\mathit{cn}}$ of connective $E_{\infty}$ rings and the huge $\infty$-topos relative the regular cardinal $\kappa_1$
\begin{equation*}
\Shv_{\textit{flat}}\subset\Fun(\CAlg^{\mathit{cn}},\Kan(\kappa_1))
\end{equation*}
of large sheaves on $\CAlg^{\mathit{cn},\op}$ for the flat topology \cite[Pr. 5.4]{DAGVII}. We let
\begin{equation*}
\begin{tikzpicture} 
\matrix(m)[matrix of math nodes, 
row sep=5ex, column sep=4ex, 
text height=1.5ex, text depth=0.25ex] 
{\Mod&\QCoh\\ 
\CAlg^{\mathit{cn}}&\Shv_{\textit{flat}}^{\op}\\}; 
\path[>=stealth,->,font=\scriptsize] 
(m-1-1) edge (m-1-2) 
edge node[left]{$q$} (m-2-1) 
(m-1-2) edge node[right]{$p$} (m-2-2) 
(m-2-1) edge[right hook->] (m-2-2); 
\end{tikzpicture}
\end{equation*}
be the pullback square in which $p$ is the right Kan extension of $q$, as constructed in Pr. \ref{prp:RKEofTcats}. The objects of $\QCoh$ can be thought of as pairs $(X,M)$ consisting of a sheaf $X\colon\fromto{\CAlg^{\mathit{cn}}}{\Kan(\kappa_1)}$ for the flat topology and a quasicoherent module $M$ over $X$.

Of course $(\QCoh,p)$ is a $\Shv_{\textit{flat}}$-$\infty$-category and $(\Mod,q)$ is a $\CAlg^{\mathit{cn},\op}$-$\infty$-category, but normally it is awkward to work with such enormous bases. 

To pass to a smaller base, one can consider, for example, a connected, noetherian scheme $X$ and suppose $x$ a geometric point of $X$. Then if $\pi_1^{\et}(X,x)$ is the \'etale fundamental group of $X$, then by Grothendieck's Galois duality \cite[Exp. V, \S 7]{MR50:7129}, the $\infty$-category $\FEt(X)^{\textit{conn}}$ of connected finite \'etale covers is equivalent to $\OO_{\pi_1^{\et}(X,x)}$; then one can pull back $p$ along the forgetful functor
\[\fromto{\OO_{\pi_1^{\et}(X,x)}\simeq\FEt(X)^{\textit{conn}}}{\Shv_{\textit{flat}}}.\]
The result is a $\pi_1^{\et}(X,x)$-$\infty$-category $\QCoh_{\FEt(X)^{\textit{conn}}}$.

As a subexample, we may consider the case in which $X=\Spec F$ with $x=\Spec\Omega$ for a separable closure $\Omega\supset F$ with absolute Galois group $G=\pi_1^{\et}(X,x)$. Then we obtain a derived version of the $G$-$1$-category above.
\end{exm}


\section{Parametrized opposites} Forming the opposite of an $\infty$-category is a trivial matter: one simply precomposes the underlying simplicial set with the unique nontrivial involution $\op\colon\fromto{\Delta}{\Delta}$. For parametrized $\infty$-categories, there is an added wrinkle: one must involve the dualization of \cite{BGN}.

\begin{dfn}\label{dfn:opposite} For any $T$-$\infty$-category $(C,p)$, the \emph{opposite $T$-$\infty$-category} is the $T$-$\infty$-category
\[(C,p)^{\op}\coloneq((C^{\op})^{\vee},(p^{\op})^{\vee})=(({}_{\vee}C)^{\op},({}_{\vee}p)^{\op}),\]
in the notation of \cite{Aefffib}. For brevity, we may write $C^{\vop}$ for $(C,p)^{\op}$. (The ``v'' stands for \emph{vertical}; we think of our construction as a fibrewise opposite construction.) Hence the objects of $C^{\vop}$ are precisely the objects of $C$, and a map $\fromto{x}{y}$ of $C^{\vop}$ is a \emph{cospan}
\begin{equation*}\label{eq:mapfromxtoyinXdual}
\begin{tikzpicture}[baseline]
\matrix(m)[matrix of math nodes, 
row sep=2ex, column sep=3ex, 
text height=1.5ex, text depth=0.25ex] 
{&u&\\ 
y&&x\\}; 
\path[>=stealth,<-,inner sep=0.9pt,font=\scriptsize] 
(m-1-2) edge node[above left]{$g$} (m-2-1) 
edge node[above right]{$f$} (m-2-3); 
\end{tikzpicture}
\end{equation*}
of $C$ in which $p(g)$ is a degenerate edge of $T^{\op}$, and $f$ is a $p$-cocartesian edge.
\end{dfn}

It follows from \cite{BGN} that the assignment $\goesto{(C,p)}{(C,p)^{\op}}$ defines an auto-equivalence of the $\infty$-category $\Cat_{\infty,T}$.


\section{Parametrized subcategories} The notion of a subcategory of an $\infty$-category has a very rigid meaning. In effect, this notion is defined so that ``lying in the subcategory'' is a homotopy invariant notion for morphisms. We introduce the parametrized analogue of that notion.

\begin{rec} Recall \cite[\S 1.2.11]{HA} that a \emph{subcategory} of an $\infty$-category $C$ is a simplicial subset $C'\subset C$ that can be presented as a pullback 
\begin{equation*}
\begin{tikzpicture}[baseline]
\matrix(m)[matrix of math nodes,
row sep=4ex, column sep=4ex,
text height=1.5ex, text depth=0.25ex]
{C' & C \\
N(hC)' & NhC \\ };
\path[>=stealth,->,font=\scriptsize]
(m-1-1) edge[right hook->] (m-1-2)
edge (m-2-1)
(m-1-2) edge (m-2-2)
(m-2-1) edge[right hook->] (m-2-2);
\end{tikzpicture}
\end{equation*}
in which $\into{(hC)'}{hC}$ is the inclusion of a subcategory. Note that this implies that the inclusion $\into{C'}{C}$ is an inner fibration and that $h(C')\simeq(hC)'$.

Properties of subcategories are typically determined by their homotopy categories. For example, we will say that $C'\subset C$ is \emph{stable under equivalences} if $(hC)'\subset hC$ is stable under isomorphisms; in this case the inclusion $\into{C'}{C}$ is a categorical fibration. Similarly, we will say that $C'\subset C$ is \emph{full} if $(hC)'\subset hC$ is full.
\end{rec}

\begin{dfn} Suppose $T$ a $\infty$-category. A \emph{$T$-subcategory} $(C',p')$ of an $T$-$\infty$-cate\-gory $(C,p)$ is a subcategory $C'\subset C$ such that the restriction $p'\coloneq p|C'$ is a cocartesian fibration, and an edge of $C'$ is $p'$-cocartesian if and only if it is $p$-cocartesian.

We will say that $(C',p')\subset(C,p)$ is \emph{stable under equivalences} if a $p$-cocartesian edge $\fromto{x}{y}$ of $C$ lies in $C'$ just in case $x$ does. We will say that $(C',p')\subset(C,p)$ is \emph{full} just in case, for any object $V\in T$, the inclusion $\into{C'_V}{C_V}$ is full.
\end{dfn}

\begin{lem}\label{lem:Gsubcat} Suppose $T$ a $\infty$-category, suppose $(C,p)$ an $T$-$\infty$-category, and suppose $C'\subset C$ a subcategory. Then $C'$ is an $T$-subcategory stable under equivalences just in case the following conditions are satisfied.
\begin{itemize}
\item A $p$-cocartesian edge $\fromto{x}{y}$ of $C$ lies in $C'$ just in case $x$ does.
\item For any commutative triangle
\begin{equation*}
\begin{tikzpicture}[baseline]
\matrix(m)[matrix of math nodes,
row sep=4ex, column sep=4ex,
text height=1.5ex, text depth=0.25ex]
{x&&y\\
&z&\\};
\path[>=stealth,->,font=\scriptsize]
(m-1-1) edge node[above]{$\eta$} (m-1-3)
edge node[below left]{$f$} (m-2-2)
(m-1-3) edge node[below right]{$g$} (m-2-2);
\end{tikzpicture}
\end{equation*}
of $C$ in which $\eta$ is $p$-cocartesian, if $f$ lies in $C'$, then so does $g$.
\end{itemize}
\end{lem}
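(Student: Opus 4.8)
The plan is to notice first that the phrase ``stable under equivalences'' appearing in the conclusion is, by its definition, exactly the first displayed bullet; so the real content is the equivalence between the remaining part of being a $T$-subcategory---that $p'\coloneq p|C'$ is a cocartesian fibration whose cocartesian edges are precisely the $p$-cocartesian edges lying in $C'$---and the conjunction of the two bullets. Since $\into{C'}{C}$ is an inner fibration (this is built into the notion of subcategory, \cite[\S 1.2.11]{HA}) and $p$ is an inner fibration, $p'$ is automatically an inner fibration, so throughout only cocartesian lifts are at issue. I would also record the elementary fact, immediate from the pullback presentation $C'\cong C\times_{NhC}N(hC)'$, that a simplex $\fromto{\Delta^n}{C}$ factors through $C'$ if and only if each of its vertices and each of its edges does (composites are then automatic, $h(C')$ being a subcategory of $hC$); this reduces every ``lies in $C'$'' question to a question about edges.

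For the implication ``two bullets $\Rightarrow$ $T$-subcategory stable under equivalences'', assume the bullets. To see that $p'$ is a cocartesian fibration, fix an object $x$ of $C'$ and an edge $\bar{f}$ of $T^{\op}$ out of $p(x)$, choose a $p$-cocartesian lift $e\colon\fromto{x}{y}$ in $C$, and note that $e\in C'$ by the first bullet. I would then check that $e$ is $p'$-cocartesian via the horn-lifting criterion of \cite[\S 2.4.4]{HTT}: given a map $\fromto{\Lambda^n_0}{C'}$ carrying $\Delta^{\{0,1\}}$ to $e$, together with a compatible $\fromto{\Delta^n}{T^{\op}}$, $p$-cocartesianness of $e$ supplies a filler $\fromto{\Delta^n}{C}$, and it remains to see that it lands in $C'$. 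When $n\geq 3$ every edge of $\Delta^n$ already appears in $\Lambda^n_0$, so all edges of the filler lie in $C'$ and the reduction above finishes the job; when $n=2$ the only new edge is $\Delta^{\{1,2\}}$, and the filler is exactly a commutative triangle of the shape in the second bullet with $\eta=e$ $p$-cocartesian and $f\in C'$, so the second bullet puts the new edge in $C'$. This proves that $p'$ is a cocartesian fibration and that every $p$-cocartesian edge of $C'$ is $p'$-cocartesian. For the reverse inclusion of edge classes, given a $p'$-cocartesian $e\colon\fromto{x}{y}$ I would compare it with a $p$-cocartesian lift $e_0\colon\fromto{x}{y_0}$ of $p(e)$; since $e_0$ lies in $C'$ and is $p'$-cocartesian by what was just shown, essential uniqueness of cocartesian lifts produces an equivalence $\fromto{y_0}{y}$ in the fibre over $p(y)$ compatible with $e_0$ and $e$, and as this equivalence is $p$-cocartesian in $C$ and $p$-cocartesian edges compose, $e$ is $p$-cocartesian in $C$.

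For the implication ``$T$-subcategory stable under equivalences $\Rightarrow$ two bullets'', the first bullet is again just the definition of ``stable under equivalences''. For the second, take a commutative triangle $\eta\colon\fromto{x}{y}$, $f\colon\fromto{x}{z}$, $g\colon\fromto{y}{z}$ in $C$ with $\eta$ $p$-cocartesian and $f\in C'$. Then $x\in C'$ (it is an endpoint of the edge $f$ of $C'$), hence $\eta\in C'$ by stability under equivalences, and therefore $\eta$ is $p'$-cocartesian because $C'$ is a $T$-subcategory. Now I would invoke the mapping-space pullback characterization of cocartesian edges for $\eta$ both in $C$ and in $C'$: these assemble into a map of homotopy pullback squares, and the point of $\Map_C(y,z)$ given by $g$ maps to the point cut out by $f$ and $p(g)$, which lies in the image from $\Map_{C'}(x,z)$ because $f$ itself lies in $C'$; hence $g$ is homotopic to an edge of $C'$, and since $\into{C'}{C}$ is a subcategory this forces $g\in C'$.

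The only non-formal point---and so the main obstacle---is the $n=2$ case of the horn-lifting check in the first implication: the cases $n\geq 3$ are automatic, and it is precisely $n=2$ that both consumes the second bullet and reveals it to be the exact extra hypothesis one needs. Everything else is the standard toolkit for cocartesian fibrations (existence and essential uniqueness of cocartesian lifts, stability under composition, the horn-lifting and mapping-space criteria) together with the routine homotopy-invariance of membership in a subcategory.
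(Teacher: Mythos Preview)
The paper states this lemma without proof, leaving it as a routine verification; there is therefore no argument to compare against. Your proof is correct and supplies exactly the expected details: the reduction ``a simplex of $C$ lands in $C'$ iff its vertices and edges do'' via the pullback presentation of a subcategory, the horn-lifting verification of $p'$-cocartesianness with the $n=2$ case isolating precisely where the second bullet is consumed, and the mapping-space comparison for the converse are all sound. One small point you leave implicit in the converse direction is that $z\in C'$ (needed to form $\Map_{C'}(y,z)$), but this is immediate since $f\in C'$ forces both its endpoints into $C'$.
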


\begin{lem} Suppose $T$ an $\infty$-category, suppose $(C,p)$ a $T$-$\infty$-category, and suppose $(C',p')$ a $T$-subcategory thereof. Then $(C',p')$ is full in $(C,p)$ if and only if $C'$ is full in $C$.
\begin{proof} Suppose $X,Y\in C'$ are two objects, and write $V$ and $W$ for their images in $T^{\op}$. For any morphism $f\colon\fromto{V}{W}$ in $T^{\op}$, choose a $p'$-cocartesian edge $\fromto{X}{X'}$ lying over $f$ so that the fiber $\Map_{C'}^f(X,Y)$ of
\[\fromto{\Map_{C'}(X,Y)}{\Map_{T^{\op}}(V,W)}\]
over $f$ is equivalent to $\Map_{C'_W}^f(X',Y)$, and the fiber $\Map_{C}^f(X,Y)$ of
\[\fromto{\Map_{C}(X,Y)}{\Map_{T^{\op}}(V,W)}\]
over $f$ is equivalent to $\Map_{C_W}^f(X',Y)$. We thus find that the inclusion map of fibers
\[\fromto{\Map_{C'}^f(X,Y)}{\Map_{C}^f(X,Y)}\]
is an equivalence for any vertex $f\in\Map_{T^{\op}}(V,W)$, whence $\fromto{\Map_{C'}(X,Y)}{\Map_{C}(X,Y)}$ is an equivalence as well.
\end{proof}
\end{lem}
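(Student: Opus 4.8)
The plan is to prove both implications of the equivalence. The direction ``$(C',p')$ full in $(C,p)$ implies $C'$ full in $C$'' is essentially definitional combined with a standard fact about mapping spaces in cocartesian fibrations: if all the fibers $C'_V \subset C_V$ are full subcategories, then we should be able to assemble this into fullness of $C'$ in $C$. I would argue this by the same fiberwise analysis as in the converse: for objects $X, Y \in C'$ lying over $V, W \in T^{\op}$ and a morphism $f\colon V \to W$, choose a $p'$-cocartesian (equivalently $p$-cocartesian, by definition of a $T$-subcategory) edge $X \to X'$ over $f$, so that $\Map^f_{C'}(X,Y) \simeq \Map^f_{C'_W}(X',Y)$ and $\Map^f_C(X,Y) \simeq \Map^f_{C_W}(X',Y)$; fullness of $C'_W$ in $C_W$ gives that the map on fibers over each $f$ is an equivalence, and since this holds over every vertex $f$ of $\Map_{T^{\op}}(V,W)$ we conclude $\Map_{C'}(X,Y) \to \Map_C(X,Y)$ is an equivalence.

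For the converse direction ``$C'$ full in $C$ implies $(C',p')$ full in $(C,p)$'': by definition, fullness of the $T$-subcategory means that each $\into{C'_V}{C_V}$ is a full subcategory inclusion. So I would take $X, Y \in C'_V$, i.e.\ objects of $C'$ both lying over the same object $V$, and compute $\Map_{C'_V}(X,Y)$ as the fiber of $\Map_{C'}(X,Y) \to \Map_{T^{\op}}(V,V)$ over the identity edge $\mathrm{id}_V$, and similarly $\Map_{C_V}(X,Y)$ as the corresponding fiber for $\Map_C(X,Y) \to \Map_{T^{\op}}(V,V)$. Since $C' \subset C$ is full, $\Map_{C'}(X,Y) \to \Map_C(X,Y)$ is an equivalence; moreover this map lies over $\Map_{T^{\op}}(V,V)$ (both mapping spaces map compatibly to $T^{\op}$, as $C' \to C$ is over $T^{\op}$), so passing to fibers over $\mathrm{id}_V$ yields that $\Map_{C'_V}(X,Y) \to \Map_{C_V}(X,Y)$ is an equivalence. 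As $X,Y$ were arbitrary in $C'_V$, the inclusion $\into{C'_V}{C_V}$ is full, which is exactly what ``$(C',p')$ full in $(C,p)$'' demands.

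The only real content to pin down is the compatibility of the mapping-space fibrations with the projection to $T^{\op}$ — i.e.\ that for $X, Y \in C'$ over $V$, the fiber of $\Map_{C'}(X,Y)$ over the point $\mathrm{id}_V \in \Map_{T^{\op}}(V,V)$ is $\Map_{C'_V}(X,Y)$, and that the inclusion $C' \hookrightarrow C$ induces the map of these fibrations covering the identity on $\Map_{T^{\op}}(V,V)$. This is the standard description of mapping spaces in a (co)cartesian fibration (as used already in the preceding lemma), together with the observation that an inner fibration restricted over a point gives mapping spaces in the fiber; I expect this bookkeeping to be the main, though minor, obstacle. Everything else is formal: an equivalence of spaces over a base restricts to an equivalence on each fiber.

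\begin{proof}
Suppose first that $(C',p')$ is full in $(C,p)$, so that $\into{C'_V}{C_V}$ is a full subcategory inclusion for every $V \in T$. Let $X, Y \in C'$ be objects, lying over $V, W \in T^{\op}$. The inclusion $\into{C'}{C}$ lies over $T^{\op}$, so the induced map $\fromto{\Map_{C'}(X,Y)}{\Map_C(X,Y)}$ is compatible with the projections to $\Map_{T^{\op}}(V,W)$. Fix a vertex $f\in\Map_{T^{\op}}(V,W)$ and choose a $p'$-cocartesian edge $\fromto{X}{X'}$ over $f$; as $(C',p')$ is a $T$-subcategory this edge is also $p$-cocartesian. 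Then the fiber of $\Map_{C'}(X,Y)$ over $f$ is equivalent to $\Map_{C'_W}(X',Y)$ and the fiber of $\Map_C(X,Y)$ over $f$ is equivalent to $\Map_{C_W}(X',Y)$, and the map of fibers is the inclusion $\fromto{\Map_{C'_W}(X',Y)}{\Map_{C_W}(X',Y)}$, which is an equivalence since $C'_W\subset C_W$ is full. As this holds over every vertex $f$, the map $\fromto{\Map_{C'}(X,Y)}{\Map_C(X,Y)}$ is an equivalence, so $C'$ is full in $C$.

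Conversely, suppose $C'$ is full in $C$. Let $V\in T$ and let $X, Y\in C'_V$. Since $\into{C'}{C}$ lies over $T^{\op}$, the equivalence $\fromto{\Map_{C'}(X,Y)}{\Map_C(X,Y)}$ is a map over $\Map_{T^{\op}}(V,V)$, hence restricts to an equivalence on the fiber over the vertex $\mathrm{id}_V$. But that fiber is $\Map_{C'_V}(X,Y)$ on the source and $\Map_{C_V}(X,Y)$ on the target, and the induced map is the canonical inclusion. Thus $\fromto{\Map_{C'_V}(X,Y)}{\Map_{C_V}(X,Y)}$ is an equivalence for all $X, Y\in C'_V$, so $\into{C'_V}{C_V}$ is full. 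As $V$ was arbitrary, $(C',p')$ is full in $(C,p)$.
\end{proof}
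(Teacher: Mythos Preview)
Your proof is correct and, for the nontrivial direction (fiberwise fullness implies that $C'$ is full in $C$), it is exactly the argument the paper gives: compute the fibers of $\Map_{C'}(X,Y)\to\Map_{T^{\op}}(V,W)$ via a $p'$-cocartesian lift and use fullness of $C'_W\subset C_W$. The paper's proof only writes out this direction, tacitly treating the converse as evident; you have additionally supplied the easy converse (pass to the fiber over $\mathrm{id}_V$), which is a reasonable completion.
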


\begin{lem}\label{lem:fullGsubcat} Suppose $T$ an $\infty$-category, suppose $(C,p)$ an $T$-$\infty$-category, and suppose $C'\subset C$ a full subcategory. Then the following are equivalent.
\begin{itemize}
\item $C'$ is a full $T$-subcategory of $C$ that is stable under equivalences.
\item For any $p$-cocartesian edge $\fromto{x}{y}$ of $C$, if $x$ lies $C'$ then so does $y$.
\end{itemize}
\end{lem}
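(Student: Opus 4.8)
The plan is to deduce this from the two preceding lemmas: Lemma~\ref{lem:Gsubcat}, which characterizes the subcategories $C'\subset C$ that are $T$-subcategories stable under equivalences by two conditions, and the lemma immediately above, which says that a $T$-subcategory $C'\subset C$ is full in the parametrized sense exactly when $C'$ is full in $C$ in the ordinary sense. Since here $C'$ is assumed full in $C$, parametrized fullness will be automatic as soon as we know $C'$ is a $T$-subcategory; so the only real work is to see that, in the presence of ordinary fullness, the single displayed cocartesian-edge condition is equivalent to the two conditions of Lemma~\ref{lem:Gsubcat}.

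The implication from the first bullet to the second is immediate: if $C'$ is a $T$-subcategory stable under equivalences, then by the very definition of ``stable under equivalences'' a $p$-cocartesian edge $\fromto{x}{y}$ of $C$ lies in $C'$ precisely when $x$ does; hence if $x$ lies in $C'$, so does the edge, and therefore so does its target $y$.

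For the converse I would simply verify the two conditions of Lemma~\ref{lem:Gsubcat}. For the first, that a $p$-cocartesian edge $\fromto{x}{y}$ lies in $C'$ iff $x$ does: one direction is trivial, since a subcategory contains the endpoints of each of its edges; for the other, the hypothesis gives $y\in C'$ whenever $x\in C'$, and then the edge $\fromto{x}{y}$ lies in $C'$ because $C'$ is a full subcategory of $C$ and both of its endpoints lie in $C'$. For the second condition, consider a commutative triangle with $p$-cocartesian hypotenuse $\eta\colon\fromto{x}{y}$ and legs $f\colon\fromto{x}{z}$, $g\colon\fromto{y}{z}$, and suppose $f$ lies in $C'$; then $x$ and $z$ lie in $C'$, applying the hypothesis to $\eta$ gives $y\in C'$, and fullness of $C'$ again forces $g$ into $C'$. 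Lemma~\ref{lem:Gsubcat} then yields that $C'$ is a $T$-subcategory stable under equivalences, and the preceding lemma upgrades it to a \emph{full} $T$-subcategory, since $C'$ is full in $C$.

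I expect no genuine obstacle here; the only point that does any work is the observation that an edge of $C$ whose endpoints both lie in the full subcategory $C'$ automatically lies in $C'$, which is part of the notion of ``full subcategory'' recalled at the start of this section (equivalently, $h(C')\subset hC$ is full). The care needed is purely bookkeeping: keeping straight the parametrized versus ordinary meanings of ``full'' and ``stable under equivalences'' as one passes between the two auxiliary lemmas.
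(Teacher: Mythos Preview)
Your proof is correct. The paper states this lemma without proof, and your argument is precisely the intended one: it is the direct combination of Lemma~\ref{lem:Gsubcat} with the unnamed lemma preceding it, using only the elementary fact that an edge of $C$ lies in a full subcategory $C'$ as soon as both its endpoints do.
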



\section{Constructing $T$-$\infty$-categories via pairings} One particularly powerful construction with cartesian and cocartesian fibrations comes from \cite[\S 3.2.2]{HTT}.

\begin{rec} Suppose $p\colon\fromto{X}{T^{\op}}$ a cartesian fibration and $q\colon\fromto{Y}{T^{\op}}$ a cocartesian fibration. Suppose $\XX\colon\fromto{T}{\Cat_{\infty}}$ a functor that classifies $p$ and $\YY\colon\fromto{T^{\op}}{\Cat_{\infty}}$ a functor that classifies $q$. Clearly one may define a functor
\begin{equation*}
\Fun(\XX,\YY)\colon\fromto{T^{\op}}{\Cat_{\infty}}
\end{equation*}
that carries a vertex $s$ of $T^{\op}$ to the $\infty$-category $\Fun(\XX(s),\YY(s))$ and an edge $\eta\colon\fromto{s}{t}$ of $T^{\op}$ to the functor
\begin{equation*}
\fromto{\Fun(\XX(s),\YY(s))}{\Fun(\XX(t),\YY(t))}
\end{equation*}
given by the assignment $\goesto{F}{\YY(\eta)\circ F\circ\XX(\eta)}$.
\end{rec}

\begin{cnstr}\label{cor32213} If one wishes to work instead with the fibrations directly (avoiding straightening and unstraightening), the following construction provides an elegant way of writing explicitly the cocartesian fibration classified by the functor $\Fun(\XX,\YY)$.

Suppose $p\colon\fromto{X}{T^{\op}}$ is a cartesian fibration classified by a functor $\XX\colon\fromto{T}{\Cat_{\infty}}$, and suppose $q\colon\fromto{Y}{T^{\op}}$ is a cocartesian fibration classified by a functor $\YY\colon\fromto{T^{\op}}{\Cat_{\infty}}$. One defines a simplicial set $\widetilde{\underline{\Fun}}_{T^{\op}}(X,Y)$ and a map $r\colon\fromto{\widetilde{\underline{\Fun}}_{T^{\op}}(X,Y)}{T^{\op}}$ defined by the following universal property: for any map $\sigma\colon\fromto{K}{T^{\op}}$, one has a bijection
\begin{equation*}
\Mor_{/T^{\op}}(K,\widetilde{\underline{\Fun}}_{T^{\op}}(X,Y))\cong\Mor_{/T^{\op}}(X\times_{T^{\op}}K,Y),
\end{equation*}
functorial in $\sigma$.

It is then shown in \cite[Cor. 3.2.2.13]{HTT} (see also \cite[Ex. 3.10]{BarShah}) that $r$ is a cocartesian fibration, and an edge
\[g\colon\fromto{\Delta^1}{\widetilde{\underline{\Fun}}_{T^{\op}}(X,Y)}\]
is $r$-cocartesian just in case the induced map $\fromto{X\times_{T^{\op}}\Delta^1}{Y}$ carries $p$-cartesian edges to $q$-cocartesian edges. The fiber of the map $\fromto{\widetilde{\underline{\Fun}}_{T^{\op}}(X,Y)}{S}$ over a vertex $s$ is the $\infty$-category $\Fun(X_s,Y_s)$, and for any edge $\eta\colon\fromto{s}{t}$ of $T^{\op}$, the functor $\eta_!\colon\fromto{T_s}{T_t}$ induced by $\eta$ is equivalent to the functor $\goesto{F}{\YY(\eta)\circ F\circ\XX(\eta)}$ described above.
\end{cnstr}


\section{A technical result: the strong pushforward} Before we proceed to the three main results of this paper, we need a key technical result, Pr. \ref{prp:strongpushforward} We will prove the existence of what we call the \emph{strong pushworward}. In effect, this is an efficient way of selecting compatible families of cocartesian edges in a cocartesian fibration. Glasman has already constructed the strong pushforward in another context. In this section, we give a different, more general construction.

\begin{ntn} In this section, we fix a cocartesian fibration $p\colon\fromto{X}{S}$ of $\infty$-categories. We regard
\[\mathscr{O}(S)\coloneq\Fun(\Delta^1,S)\]
as a cartesian fibration over $S$ via the source map.

Write $X^{\sim}$ for the marked simplicial set $(X,iX)$, where $iX\subset X_1$ is the class of equivalences.
\end{ntn}

\begin{prp}\label{prp:strongpushforward} The cofibration
\[\iota\colon\fromto{X^\sim}{\leftnat{X}\times_{S^\sharp}\rightnat{\mathscr{O}(S)}}\]
of marked simplicial sets is a homotopy equivalence in $s\Set^+_{/S}$.
\end{prp}

The point here is that we can \emph{functorially} select a cocartesian arrow over every arrow of $S$ starting at a given point.

\begin{lem} \label{push} Let $\mathscr{O}^{\cocart}(X)$ denote the full subcategory of $\mathscr{O}(X)$ spanned by the $p$-cocartesian edges. Then the natural map
\[ (\sigma,p)\colon\fromto{\mathscr{O}^{\cocart}(X)}{X\times_S\mathscr{O}(S)}\]
sending every cocartesian arrow $f$ to $(f(0),p(f))$ is a trivial Kan fibration.
\begin{proof} We need to show that for every solid arrow diagram
\begin{equation*}
\begin{tikzpicture}
\matrix(m)[matrix of math nodes,
row sep=4ex, column sep=4ex,
text height=1.5ex, text depth=0.25ex]
{\partial\Delta^n & \mathscr{O}^{\cocart}(X)\\
\Delta^n & X\times_S\mathscr{O}(S),\\ };
\path[>=stealth,->,font=\scriptsize]
(m-1-1) edge (m-1-2)
edge[right hook->] (m-2-1)
(m-1-2) edge node[right]{$(\sigma,p)$} (m-2-2)
(m-2-1) edge (m-2-2)
edge[dotted] (m-1-2);
\end{tikzpicture}
\end{equation*}
there is a dotted lift. To do that we will use the formalism of marked simplicial set. Unwrapping the definitions we see that we need to find a lift on the following diagram of marked simplicial sets
\begin{equation*}
\begin{tikzpicture}[baseline]
\matrix(m)[matrix of math nodes,
row sep=5ex, column sep=3ex,
text height=1.5ex, text depth=0.25ex]
{\left((\Delta^n)^\flat \times \{0\}\right) \cup^{\left((\partial\Delta^n)^\flat \times\{0\}\right)} \left((\partial\Delta^n)^\flat \times (\Delta^1)^\sharp\right) & \leftnat{X} \\
(\Delta^n)^\flat \times (\Delta^1)^\sharp & S^\sharp\\ };
\path[>=stealth,->,font=\scriptsize]
(m-1-1) edge node[above]{} (m-1-2)
edge node[left]{} (m-2-1)
(m-1-2) edge node[right]{} (m-2-2)
(m-2-1) edge node[below]{} (m-2-2)
edge[dotted] (m-1-2);
\end{tikzpicture}
\end{equation*}
By \cite[Pr. 3.1.1.5(2$\mkern-2mu{}^{\prime\prime}$)]{HTT}, the left vertical arrow is the opposite of a marked anodyne arrow, so by \cite[Pr. 3.1.1.6]{HTT} it lifts against all cocartesian fibrations.
\end{proof}
\end{lem}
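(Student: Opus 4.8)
The plan is to check directly that $(\sigma,p)$ has the right lifting property against every boundary inclusion $\into{\partial\Delta^n}{\Delta^n}$, since a map of simplicial sets is a trivial Kan fibration exactly when it does. So I would fix a solid commutative square whose left edge is $\into{\partial\Delta^n}{\Delta^n}$, whose top edge lands in $\mathscr{O}^{\cocart}(X)$, and whose bottom edge lands in $X\times_S\mathscr{O}(S)$, and produce a diagonal filler.

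The main work is to reinterpret this lifting problem as one of marked simplicial sets over $S^\sharp$ with target $\leftnat{X}$. Using the adjunction $\mathscr{O}(X)=\Fun(\Delta^1,X)$, I would first note that an $n$-simplex of the full subcategory $\mathscr{O}^{\cocart}(X)$ is the same datum as a map of marked simplicial sets $\fromto{(\Delta^n)^\flat\times(\Delta^1)^\sharp}{\leftnat{X}}$: the marked edges of the source are precisely the products $\{i\}\times\Delta^1$ (up to degeneracy), so requiring that they map to marked edges of $\leftnat{X}$ is exactly the demand that each vertex of the simplex name a $p$-cocartesian edge. Unwinding the two legs of the square in the same way, the bottom map supplies the base datum $\fromto{(\Delta^n)^\flat\times(\Delta^1)^\sharp}{S^\sharp}$ together with its source restriction $\fromto{(\Delta^n)^\flat\times\{0\}}{\leftnat{X}}$, while the top map supplies $\fromto{(\partial\Delta^n)^\flat\times(\Delta^1)^\sharp}{\leftnat{X}}$; these agree on the overlap $(\partial\Delta^n)^\flat\times\{0\}$ and therefore glue to a single map out of the pushout displayed in the lemma.

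With the square set up this way, a filler exists if and only if the left vertical map lifts against $\fromto{\leftnat{X}}{S^\sharp}$. I would recognize that left vertical map as the pushout-product of the boundary inclusion $\into{(\partial\Delta^n)^\flat}{(\Delta^n)^\flat}$ with the source-vertex inclusion $\into{\{0\}}{(\Delta^1)^\sharp}$. Because it is the source vertex $\{0\}$ rather than the target vertex $\{1\}$ that is retained by $(\sigma,p)$, this pushout-product is the opposite, under the involution $\op$, of the morphism in \cite[Pr. 3.1.1.5(2${}^{\prime\prime}$)]{HTT}, hence the opposite of a marked anodyne morphism. Since $p$ is a cocartesian fibration and $\leftnat{X}$ marks exactly the $p$-cocartesian edges, the structure map $\fromto{\leftnat{X}}{S^\sharp}$ enjoys the right lifting property against opposites of marked anodyne morphisms by \cite[Pr. 3.1.1.6]{HTT} applied to the cocartesian fibration $p$. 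This furnishes the filler and concludes the argument.

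I expect the only genuine obstacle to be the bookkeeping of the second step: one must be careful about which edges are marked in the product $(\Delta^n)^\flat\times(\Delta^1)^\sharp$, about the fact that $(\sigma,p)$ records the source $f(0)$ and hence the vertex $\{0\}$ (not $\{1\}$), and about the resulting orientation that forces the passage to opposites before the marked-anodyne machinery of \cite{HTT} applies. Once the marked lifting square is assembled correctly, the cited results dispatch it at once.
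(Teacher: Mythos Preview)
Your proposal is correct and follows essentially the same approach as the paper: both reduce the trivial Kan fibration condition to a lifting problem of marked simplicial sets over $S^\sharp$, identify the left vertical map as the pushout-product of $\into{(\partial\Delta^n)^\flat}{(\Delta^n)^\flat}$ with $\into{\{0\}}{(\Delta^1)^\sharp}$, and then invoke \cite[Pr.~3.1.1.5(2${}^{\prime\prime}$)]{HTT} and \cite[Pr.~3.1.1.6]{HTT} after passing to opposites. Your write-up even supplies more of the bookkeeping (the identification of marked edges in the product, and the reason the source vertex forces the orientation reversal) than the paper does.
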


\begin{cnstr} Let us select a section $c\colon\fromto{X\times_B\mathscr{O}(S)}{\mathscr{O}^{\cocart}(X)}$ of $(\sigma,p)$. Then we can define the \emph{strong pushforward}
\[P_p=\tau\circ c\colon\fromto{X\times_S\mathscr{O}(S)}{X}\]
as the composition of $c$ with the target map from $\mathscr{O}^{\cocart}(X)$ to $X$.
\end{cnstr}

\begin{lem}\label{lem:Ppinverseofiota} The strong pushforward $P_p$ is a marked homotopy inverse of $\iota$.
\begin{proof} First we'll show that $P_p$ is a marked map, and then we will produce the required marked homotopies $P_p\circ\iota\sim \id$ and $\iota\circ P_p\sim \id$.

First, we claim that the strong pushforward $P_p$ is a marked map from $\leftnat{X}\times_{S^\sharp}\rightnat{\mathscr{O}(S)}$ to $X^\sim$. Indeed, choose a marked arrow in $\leftnat{X}\times_{S^\sharp}\rightnat{\mathscr{O}(S)}$. This is the data of a $p$-cocartesian arrow $f\colon\fromto{x}{x'}$ in $X$ and a commutative square
\begin{equation*}
\begin{tikzpicture}[baseline]
\matrix(m)[matrix of math nodes,
row sep=4ex, column sep=4ex,
text height=1.5ex, text depth=0.25ex]
{p(x) & s \\
p(x') & s' \\ };
\path[>=stealth,->,font=\scriptsize]
(m-1-1) edge node[above]{} (m-1-2)
edge node[left]{$p(f)$} (m-2-1)
(m-1-2) edge node[right]{$r$} (m-2-2)
(m-2-1) edge node[below]{} (m-2-2);
\end{tikzpicture}
\end{equation*}
in $S$ such that $r$ is an equivalence. The section $c$ carries this to a square
\begin{equation*}
\begin{tikzpicture}[baseline]
\matrix(m)[matrix of math nodes,
row sep=4ex, column sep=4ex,
text height=1.5ex, text depth=0.25ex]
{x & y \\
x' & y' \\ };
\path[>=stealth,->,font=\scriptsize]
(m-1-1) edge node[above]{} (m-1-2)
edge node[left]{$f$} (m-2-1)
(m-1-2) edge node[right]{$g$} (m-2-2)
(m-2-1) edge node[below]{} (m-2-2);
\end{tikzpicture}
\end{equation*}
in which the horizontal arrows are $p$-cocartesian. But by hypothesis $f$ is also $p$-cocartesian, whence $g$ is as well. Thus since $g$ lies above an equivalence, it follows that $g$ is an equivalence, as desired.
    
Now we construct a marked homotopy between $P_p\circ\iota$ and the identity map from $X^\sim$ to itself. To do this, consider the map $c\circ\iota\colon\fromto{X}{\mathscr{O}^{cocart}(X)\subseteq\mathscr{O}(X)}$. This corresponds to a map
\[\fromto{X\times\Delta^1}{X},\]
which is easily seen to be a marked map $\fromto{X^\sim\times(\Delta^1)^\sharp}{X^\sim}$. This defines our homotopy between the identity map and $P_p\circ\iota$.
    
Finally, we construct a marked homotopy between $\iota\circ P_p$ and the identity map from $\leftnat{X}\times_{S^\sharp}\rightnat{\mathscr{O}(S)}$ to itself. We will construct a functor
\[ \fromto{X\times_S\mathscr{O}(S)}{\Fun(\Delta^1,X\times_S\mathscr{O}(S))\cong\Fun(\Delta^1,X)\times_{\Fun(\Delta^1,S)}\Fun(\Delta^1,\mathscr{O}(S))}\]
realizing the required homotopy. The first component is just given by $c$ via
\[ c\colon\fromto{X\times_S\mathscr{O}(S)}{\mathscr{O}^{cocart}(X) \subseteq \mathscr{O}(X)}.\]
The second component is obtained by composing the second projection with a map
\[\fromto{\mathscr{O}(S)}{\Fun(\Delta^1,\mathscr{O}(S))\cong\Fun(\Delta^1\times\Delta^1,S)}\]
induced by the map $\min\colon\fromto{\Delta^1\times\Delta^1}{\Delta^1}$, which sends $(0,0)$ to 0 and the other objects to 1. Our functor thus sends an object $(x,f\colon\fromto{p(x)}{b})\in X\times_S\mathscr{O}(S)$ to the pair
\begin{equation*}
\left(c(x)\colon\fromto{x}{y},\;
\begin{tikzpicture}[baseline]
\matrix(m)[matrix of math nodes,
row sep=4ex, column sep=4ex,
text height=1.5ex, text depth=0.25ex]
{p(x) & s \\
s & s \\ };
\path[>=stealth,->,font=\scriptsize]
(m-1-1) edge node[above]{$f$} (m-1-2)
edge node[left]{$f$} (m-2-1)
(m-1-2) edge[-,double distance=1.5pt] (m-2-2)
(m-2-1) edge[-,double distance=1.5pt] (m-2-2);
\end{tikzpicture}
\right).
\end{equation*}
One verifies easily that this defines a marked map
\[\fromto{(X^\sim\times_{S^\sharp}\rightnat{\mathscr{O}(S)})\times(\Delta^1)^\sharp}{X^\sim\times_{S^\sharp}\rightnat{\mathscr{O}(S)}}\]
and therefore the desired homotopy.\qedhere
\end{proof}
\end{lem}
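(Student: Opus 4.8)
The plan is to construct an explicit homotopy inverse to $\iota$ — the \emph{strong pushforward} $P_p$, which we shall want in any case — rather than to argue abstractly. Since $\iota$ is a cofibration, it suffices to produce a marked map $P_p$ in the reverse direction together with marked homotopies $P_p\circ\iota\sim\id$ and $\iota\circ P_p\sim\id$ witnessed by the cylinder $(-)\times(\Delta^1)^\sharp$. The idea is transparent: a vertex of $\leftnat{X}\times_{S^\sharp}\rightnat{\mathscr{O}(S)}$ is a pair $(x,f)$ with $x\in X$ and $f\colon\fromto{p(x)}{b}$ an edge of $S$, and $P_p$ should return the target of a $p$-cocartesian lift of $f$ starting at $x$, while $\iota$ records the degenerate edge at $p(x)$; as a sanity check, over each object $V$ of $S$ the map $\iota$ becomes the inclusion of $X_V^\sim$ into $X_V^\sim$ times the weakly contractible marked coslice $(S^{V/})^\sim$. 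All the work is in making the choice of cocartesian lift \emph{functorial}.

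To that end I would first prove the key auxiliary statement: writing $\mathscr{O}^{\cocart}(X)\subseteq\mathscr{O}(X)$ for the full subcategory on the $p$-cocartesian edges, the map $(\sigma,p)\colon\fromto{\mathscr{O}^{\cocart}(X)}{X\times_S\mathscr{O}(S)}$ sending a cocartesian arrow $f$ to $(f(0),p(f))$ is a trivial Kan fibration. One tests the right lifting property against the boundary inclusions $\into{\partial\Delta^n}{\Delta^n}$; unwinding the two universal properties, such a lift is the same datum as a lift, in marked simplicial sets, of a square whose right vertical map is the cocartesian fibration $\fromto{\leftnat{X}}{S^\sharp}$ and whose left vertical map is the inclusion of the subcomplex $((\Delta^n)^\flat\times\{0\})\cup((\partial\Delta^n)^\flat\times(\Delta^1)^\sharp)$ into $(\Delta^n)^\flat\times(\Delta^1)^\sharp$. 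By \cite[Pr.~3.1.1.5]{HTT} this inclusion is the opposite of a marked anodyne map, and marked anodyne maps lift against every cocartesian fibration by \cite[Pr.~3.1.1.6]{HTT}; this produces the lift. One then chooses a section $c$ of $(\sigma,p)$ and sets $P_p\coloneq\tau\circ c$, with $\tau\colon\fromto{\mathscr{O}^{\cocart}(X)}{X}$ evaluation at $1$.

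It remains to check three things. First, $P_p$ is a marked map landing in $X^\sim$: a marked edge of $\leftnat{X}\times_{S^\sharp}\rightnat{\mathscr{O}(S)}$ is a $p$-cocartesian $f\colon\fromto{x}{x'}$ together with a square in $S$ whose edge $r$ opposite $p(f)$ is an equivalence, and $c$ carries it to a square of $p$-cocartesian edges of $X$; two-out-of-three for cocartesian edges then forces the fourth edge to be $p$-cocartesian over the equivalence $r$, hence an equivalence. Second, $P_p\circ\iota\sim\id_{X^\sim}$: the composite $c\circ\iota$ picks out, for each $x$, an equivalence $\fromto{x}{x'}$ lying over $\id_{p(x)}$, and the associated map $\fromto{X\times\Delta^1}{X}$ is at once seen to be a marked map $\fromto{X^\sim\times(\Delta^1)^\sharp}{X^\sim}$. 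Third, $\iota\circ P_p\sim\id$: here I would build a functor $\fromto{X\times_S\mathscr{O}(S)}{\Fun(\Delta^1,X\times_S\mathscr{O}(S))}$ with first component $c$ and second component the projection to $\mathscr{O}(S)$ followed by the map $\fromto{\mathscr{O}(S)}{\Fun(\Delta^1,\mathscr{O}(S))\cong\Fun(\Delta^1\times\Delta^1,S)}$ induced by $\min\colon\fromto{\Delta^1\times\Delta^1}{\Delta^1}$; on a vertex $(x,f\colon\fromto{p(x)}{b})$ this returns $c(x)\colon\fromto{x}{y}$ together with the degenerate square collapsing $f$ onto $\id_b$, and one verifies it gives a marked map out of $(X^\sim\times_{S^\sharp}\rightnat{\mathscr{O}(S)})\times(\Delta^1)^\sharp$. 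I expect the main obstacle to be the auxiliary trivial-fibration statement — correctly translating the lifting problem into the language of marked simplicial sets and recognizing the relevant cofibration as (opposite to) marked anodyne; the rest is bookkeeping, the most delicate part being the compatibility in the third step of the $\min$-induced homotopy with the cartesian markings on $\rightnat{\mathscr{O}(S)}$.
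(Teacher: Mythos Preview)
Your proposal is correct and follows essentially the same route as the paper. The only structural difference is that you fold the auxiliary trivial-fibration statement (that $(\sigma,p)\colon\mathscr{O}^{\cocart}(X)\to X\times_S\mathscr{O}(S)$ is a trivial Kan fibration) into your proof, whereas the paper isolates it as a separate lemma preceding the construction of $P_p$; your three verifications---that $P_p$ is marked via two-out-of-three for cocartesian edges, that $c\circ\iota$ furnishes the homotopy $P_p\circ\iota\sim\id$, and that the pair $(c,\min^*)$ furnishes the homotopy $\iota\circ P_p\sim\id$---match the paper's argument point for point.
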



\section{$T$-objects in $\infty$-categories} We can talk about $G$-objects in an arbitrary $\infty$-category in in exactly the manner proposed by Elmendorf: the $\infty$-category of $G$-objects in an $\infty$-category $D$ can simply be defined as the $\infty$-category of functors $\fromto{\OO_G^\op}{D}$. Similarly, the $\infty$-category of $T$-objects in an $\infty$-category $D$ can simply be defined as the $\infty$-category of functors $\fromto{T^\op}{D}$.

We will go further in this section, and define a $G$-$\infty$-category of $G$-objects in an $\infty$-category, whose $H$-fixed point $\infty$-category is the $\infty$-category of $H$-objects. Similarly, we define a $G$-$\infty$-category of $G$-objects in an $\infty$-category, whose fibre over an object $V$ is the $\infty$-category of $T_{/V}$-objects. Furthermore, we will prove Th. \ref{thm:univpropDG} that this $T$-$\infty$-category enjoys a useful universal property: if effect, it is the right adjoint to the Grothendieck construction $\goesto{(C,p)}{C}$ (i.e., the oplax colimit).

Put differently, the $T$-$\infty$-category of $T$-objects in $D$ is the cofree $T$-$\infty$-category cogenerated by $D$. This fact, which appears to be quite well known in ordinary category theory, is new in the $\infty$-categorical context.

\begin{dfn} Suppose $D$ an $\infty$-category. The category $D_G$ of \emph{$G$-objects in $D$} is simply the functor $\infty$-category $\Fun(\OO_G^\op, D)$.

More generally, for any $\infty$-category $T$, the category $D_T$ of \emph{$T$-objects in $D$} is the functor $\infty$-category $\Fun(T^\op, D)$.
\end{dfn}

\begin{exm} When $D$ is $\Cat_{\infty}$ or $\Top$, the straightening/unstraightening equivalences justify this notation. Indeed, they can be rewritten in the following manner:
\[\Cat_{\infty,T}\simeq(\Cat_{\infty})_T\textrm{\quad and\quad}\Top_{T}\simeq(\Top)_T.\]
\end{exm}

\begin{wrn} Note, however, that the category $\FF_G$ of finite $G$-sets is \emph{not} the $\infty$-category of $G$-objects in the category of finite sets. 
\end{wrn}

It turns out that the $\infty$-category $D_G$ is in fact the fiber over $G/G$ of a $G$-$\infty$-category. We proceed to define this $G$-$\infty$-category.
\begin{dfn}\label{dfn:TinftycatofTobjects} Suppose $T$ an $\infty$-category, and suppose $D$ an $\infty$-category. The source functor
\[\fromto{\Fun(\Delta^1,T^\op)}{\Fun(\Delta^{{\{0\}}},T^\op)\cong T^\op}\]
is a cartesian fibration. We therefore use Cnstr. \ref{cor32213} to define a simplicial set $\underline{D}_T$ over $T^\op$ thus:
\[\underline{D}_T\coloneq\widetilde{\underline{\Fun}}_{T^\op}(\Fun(\Delta^1,T^\op),D\times T^\op).\]
The structure morphism $\underline{D}_T\to T^\op$ is thus a cocartesian fibration, whence it exhibits $\underline{D}_T$ as a $T$-$\infty$-category. We'll refer to $\underline{D}_T$ as the \emph{$T$-$\infty$-category of $T$-objects in $D$}.

When $T=\OO_G$, we will write $\underline{D}_G$ for $\underline{D}_T$.
\end{dfn}

\begin{exm} Suppose $T$ an $\infty$-category. Taking $D=\Cat_{\infty}$ in Df. \ref{dfn:TinftycatofTobjects}, we obtain the \emph{$T$-$\infty$-category of $T$-$\infty$-categories} $\underline{\Cat}_{\infty,T}$. Similarly, we obtain the \emph{$T$-$\infty$-category of $T$-spaces} $\underline{\Top}_{T}$.
\end{exm}

\begin{nul} Suppose $T$ an orbital $\infty$-category, and suppose $D$ an $\infty$-category. The fiber $(\underline{D}_T)_V$ of $\underline{D}_T$ over an object $V\in T$ is the $\infty$-category of functors
\[\fromto{\underline{V}}{D},\]
giving an equivalence $(\underline{D}_T)_V\simeq D_{T_{/V}}$.

In particular, for any $G$-orbit $G/H$, the fiber $(\underline{D}_G)_{(G/H)}$ of $\underline{D}_G$ is the $\infty$-category of functors
\[\fromto{\underline{G/H}}{D}.\]
An equivalence $\OO_H\simeq(\OO_G)_{/(G/H)}$ specifies an equivalence $(\underline{D}_G)_{(G/H)}\simeq D_H$.
\end{nul}

\begin{nul} Amusingly, one recovers the Bockstein map in an unexpected manner here. Indeed, suppose $G$ a profinite group and $H\leq G$ an open subgroup. Then the full subcategory of $\OO_G^\op$ spanned by $G/H$ is the classifying space $B(N_GH/H)$.

If we restrict the cocartesian fibration
\[\underline{D}_G \to \OO_G^\op\]
to this full subcategory, we obtain, for any $\infty$-category $D$, an action of $N_GH/H$ on the fibre $(\underline{D}_G)_{(G/H)}$. By the Yoneda lemma, this action is induced by an $(N_GH/H)$-action on $\underline{G/H}$ itself; this action is given by restricting the cartesian fibration
\[\Fun(\Delta^1,\OO_G^\op) \to \Fun(\Delta^{\{0\}},\OO_G^\op)\simeq\OO_G^\op\]
to $B(N_GH/H)$.

The full subcategory of the fiber product
\[\Fun(\Delta^1,\OO_G^\op) \times_{ \OO_G^\op} B(N_GH/H)\]
spanned by the object $G/H \to G/1$ is equivalent to the classifying space $BN_GH$, and if we restrict to this full subcategory, the resulting right fibration $BN_GH \to B(N_GH/H)$ is induced by the quotient homomorphism $N_GH \to N_GH/H$. Now under the equivalence
\[\Top_{/ B(N_GH/H)} \simeq \Fun(B(N_GH/H), \Top),\]
this assertion corresponds to the observation that restricting the action of $G/H$ on $\OO_H^\op$ to the full subcategory $BH$ spanned by the object $H/1$ recovers the Bockstein map
\[B(N_GH/H) \to B \Aut(B H).\]
\end{nul}

We now turn to the universal property of the $G$-$\infty$-category of $G$-objects (or, more generally, the $T$-$\infty$-category of $T$-objects). In effect, we will show that the the $G$-$\infty$-category of $G$-objects in an $\infty$-category $D$ is the cofree $G$-$\infty$-category cogenerated by $D$.

\begin{thm}\label{thm:univpropDG} Suppose $T$ an $\infty$-category, $C$ a $T$-$\infty$-category, and $D$ an $\infty$-category. Then there is a natural equivalence
\[\Fun_{T^\op}(C,\underline{D}_T)\simeq\Fun(C,D).\]
\begin{proof} Since the cocartesian arrows in $\underline{D}_T$ are those for which the map
\[\fromto{\Delta^1\times_{T^\op}\Fun(\Delta^1,T^\op)}{D}\]
sends cartesian arrows to equivalences, we can identify $\Fun_{T^\op}(C,\underline{D}_T)$ with the $\infty$-category
\[\Map^\flat(\leftnat{C}\times_{(T^\op)^\sharp}\rightnat{\Fun(\Delta^1,T^\op)},D^\sim).\]

Note that the map $\fromto{T^\op}{\Fun(\Delta^1,T^\op)}$ sending every object to its identity arrow produces a natural cofibration of marked simplicial sets
\[\iota\colon\into{C^\sim}{\leftnat{C}\times_{(T^\op)^\sharp}\rightnat{\Fun(\Delta^1,T^\op)}}.\]
This in turns yields a natural map
\begin{multline*}    
\iota^{\star}\colon\Fun_{T^\op}(C,\underline{D}_T)\cong\Map^\flat(\leftnat{C}\times_{(T^\op)^\sharp}\rightnat{\Fun(\Delta^1,T^\op)},D^\sim)\to\\
\to\Map^\flat(C^\sim,D^\sim)\cong\Fun(C,D).
\end{multline*}

The map $\iota$ is a \emph{trivial} cofibration for the cartesian model structure on the category of marked simplicial sets (see Pr. \ref{prp:strongpushforward}), whence $\iota^{\star}$ is a trivial fibration. This completes the proof.
\end{proof} 
\end{thm}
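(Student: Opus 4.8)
The plan is to reduce both sides to mapping spaces of marked simplicial sets and then to quote the strong pushforward, Pr.~\ref{prp:strongpushforward}. First I would make the left-hand side explicit. By the defining universal property of $\widetilde{\underline{\Fun}}_{T^\op}$ in Cnstr.~\ref{cor32213}, a map over $T^\op$ from $C$ to $\underline{D}_T=\widetilde{\underline{\Fun}}_{T^\op}(\Fun(\Delta^1,T^\op),D\times T^\op)$ is the same datum as a map over $T^\op$ from $\Fun(\Delta^1,T^\op)\times_{T^\op}C$ to $D\times T^\op$, i.e.\ (discarding the automatic $T^\op$-component) a functor $\Fun(\Delta^1,T^\op)\times_{T^\op}C\to D$. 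By the characterization of $r$-cocartesian edges in Cnstr.~\ref{cor32213}, such a functor classifies a $T$-functor precisely when, for each $p$-cocartesian edge of $C$, it carries the source-cartesian edges of $\Fun(\Delta^1,T^\op)$ to equivalences of $D$; running the same analysis on the higher simplices of the relevant mapping $\infty$-categories then gives an equivalence of $\infty$-categories
\[
\Fun_{T^\op}(C,\underline{D}_T)\simeq\Map^\flat\bigl(\leftnat{C}\times_{(T^\op)^\sharp}\rightnat{\Fun(\Delta^1,T^\op)},\,D^\sim\bigr)
\]
(compare \cite[Ex. 3.10]{BarShah}).

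Next, the functor $T^\op\to\Fun(\Delta^1,T^\op)$ sending each object to its identity arrow induces the cofibration $\iota\colon C^\sim\hookrightarrow\leftnat{C}\times_{(T^\op)^\sharp}\rightnat{\Fun(\Delta^1,T^\op)}$ of Pr.~\ref{prp:strongpushforward} (applied with $S=T^\op$ and $X=C$), and precomposition with $\iota$ defines a map
\[
\iota^\star\colon\Fun_{T^\op}(C,\underline{D}_T)\longrightarrow\Map^\flat(C^\sim,D^\sim)\cong\Fun(C,D),
\]
which I claim is the asserted equivalence. Indeed, Pr.~\ref{prp:strongpushforward}, proved via the strong pushforward $P_p$ and the explicit cylinder homotopies of Lm.~\ref{lem:Ppinverseofiota}, exhibits $\iota$ as a homotopy equivalence in $s\Set^+_{/T^\op}$; since those homotopies are taken against $(\Delta^1)^\sharp$, they are honest homotopies already in $s\Set^+$, so $\iota$ is a weak equivalence, hence a trivial cofibration, for the cartesian model structure on $s\Set^+$ itself. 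That model structure is compatible with its cartesian product \cite[\S 3.1.4]{HTT}, and $D^\sim$ is fibrant in it because $D$ is an $\infty$-category, so $\Map^\flat(\iota,D^\sim)=\iota^\star$ is a trivial fibration, in particular an equivalence of $\infty$-categories.

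Naturality is then automatic: the universal property of $\widetilde{\underline{\Fun}}_{T^\op}$ is natural in both arguments, the inclusion $\iota$ of identity arrows is natural in $C$, and $\Map^\flat(-,D^\sim)$ is functorial in $D$, so $\iota^\star$ assembles into a natural transformation which we have just shown to be an objectwise equivalence.

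The step I expect to be the main obstacle is the first one: matching the fibrewise $r$-cocartesian condition defining $\Fun_{T^\op}(C,\underline{D}_T)$ with the marking on $\leftnat{C}\times_{(T^\op)^\sharp}\rightnat{\Fun(\Delta^1,T^\op)}$, and checking that this identification is compatible enough with the simplicial enrichment to promote it from an equivalence of maximal Kan complexes to an equivalence of $\infty$-categories. Once that bookkeeping is in place, the theorem follows formally from the already-established strong pushforward.
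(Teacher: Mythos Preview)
Your proposal is correct and follows essentially the same route as the paper's proof: identify $\Fun_{T^\op}(C,\underline{D}_T)$ with $\Map^\flat(\leftnat{C}\times_{(T^\op)^\sharp}\rightnat{\Fun(\Delta^1,T^\op)},D^\sim)$ via the universal property of Cnstr.~\ref{cor32213}, and then invoke Pr.~\ref{prp:strongpushforward} to see that restriction along $\iota$ is a trivial fibration. You are in fact slightly more careful than the paper in spelling out why the homotopy equivalence of Pr.~\ref{prp:strongpushforward}, which is stated relative to $T^\op$, yields a trivial cofibration in the absolute cartesian model structure on $s\Set^+$ (the $(\Delta^1)^\sharp$-homotopies of Lm.~\ref{lem:Ppinverseofiota} do the work), and in addressing naturality; the ``obstacle'' you flag in the first step is exactly where the paper is terse as well.
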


If now $D$ is a presentable $\infty$-category such that finite products in $D$ preserve colimits separately in each variable, then $D_G$ admits internal Homs. We now show that these internal Homs admit a convenient description. A technical lemma is required.

\begin{lem} \label{muik} Suppose $T$ an $\infty$-category, and suppose $D$ a presentable $\infty$-category such that finite products in $D$ preserve colimits separately in each variable. For any object $V\in T$, let
\[i_V \colon \underline{V} \to T^\op\]
denote the structure morphism for the $T$-space represented by $V$, and consider the adjunction
\[\adjunct{i_{V,!}}{D_{T_{/V}}}{D_T}{i_V^{\star}}.\]
For any $G$-object $X$ of $D$, the morphism
\[i_{V,!}i_V^{\star} X \to X \times i_{V,!} (\ast)\]
of $D_T$ given by the counit of the adjunction on the first factor and $i_{V,!}$ of the projection from $i_V^{\star} X$ to the terminal object $t$ on the second is an equivalence.
\begin{proof} For any object $W\in T$, the claim is that the natural transformation from the composite
\begin{equation*}
(T^{\op})_{V/}\times_{T^\op}(T^\op)_{/W}\ \tikz[baseline]\draw[>=stealth,->,font=\scriptsize](0,0.5ex)--(0.5,0.5ex);\ (T^{\op})_{V/}\ \tikz[baseline]\draw[>=stealth,->,font=\scriptsize](0,0.5ex)--node[above]{$i_V$}(0.65,0.5ex);\ T^{\op}\ \tikz[baseline]\draw[>=stealth,->,font=\scriptsize](0,0.5ex)--node[above]{$X$}(0.5,0.5ex);\ D
\end{equation*}
to the constant functor $\fromto{(T^{\op})_{V/}\times_{T^\op}(T^\op)_{/W}}{D}$ at the object $X(W)$ induces an equivalence on colimits.

Observe that the full subcategory $K$ of $(T^{\op})_{V/}\times_{T^\op}(T^\op)_{/W}$ spanned by objects of the form
\[(\pi\colon\fromto{V}{U},\rho\colon\equivto{U}{W})\]
where $\rho$ is an isomorphism is cofinal. This full subcategory $K$ is an $\infty$-groupoid, and the result follows from the identification
\[K\otimes X(W)\simeq(K\otimes\ast)\times X(W),\]
since the product preserves colimits separately in each variable.
\end{proof}
\end{lem}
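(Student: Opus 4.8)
The plan is to reduce the statement to the cofinality claim already sketched in the target, and then to isolate the one geometric input that makes the argument work: that the relevant indexing $\infty$-category contains a cofinal $\infty$-\emph{groupoid}. First I would translate the claim to one about colimits. By the pointwise formula for left Kan extension, $(i_{V,!}i_V^{\star}X)(W)$ is the colimit of $X\circ i_V$ restricted to the comma $\infty$-category $(T^{\op})_{V/}\times_{T^\op}(T^\op)_{/W}$, while $(X\times i_{V,!}(\ast))(W)$ is $X(W)\times(K\otimes\ast)$, where $K\otimes\ast$ denotes the colimit in $D$ of the constant diagram at the terminal object over that same comma $\infty$-category. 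So it suffices to produce an equivalence, natural in $W$, between $\operatorname{colim}(X\circ i_V)$ over the comma $\infty$-category and $(\operatorname{colim}\ast)\times X(W)$. The map in question is precisely the comparison induced by the structure maps $\fromto{V}{U}$ together with the identifications of $W$, so it is enough to check that it is an equivalence after passing to colimits.

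Next I would invoke cofinality. Write $\mathcal{E}\coloneq(T^{\op})_{V/}\times_{T^\op}(T^\op)_{/W}$, whose objects are triples consisting of a map $\fromto{V}{U}$ in $T^\op$, a map $\fromto{U}{W}$ in $T^\op$, and their common middle object; equivalently, by the description of comma $\infty$-categories, an object is a factorization $\fromto{V}{U}{\fromto{}{}}W$. Let $K\subseteq\mathcal{E}$ be the full subcategory on those factorizations in which the second map is an equivalence. The inclusion $\into{K}{\mathcal{E}}$ is cofinal: for a fixed object $(\fromto{V}{U}\to W)$ of $\mathcal{E}$, the $\infty$-category of objects of $K$ under it is the $\infty$-category of factorizations of $\fromto{U}{W}$ through an equivalence, which has an initial object (namely $\fromto{U}{W}$ itself viewed via the identity on $W$), hence is weakly contractible; so Joyal's criterion \cite[Thm. 4.1.3.1]{HTT} applies. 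Therefore both colimits may be computed over $K$ instead of $\mathcal{E}$, and the restriction of $X\circ i_V$ to $K$ is, up to the canonical identification $\rho\colon\equivto{U}{W}$, the constant diagram at $X(W)$.

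Finally I would carry out the computation over $K$. The $\infty$-category $K$ is an $\infty$-groupoid: it is equivalent to the space $\Map_{T^\op}(V,W)$ of maps $\fromto{V}{W}$ (an object of $K$ being a map $\fromto{V}{U}$ together with an equivalence $\equivto{U}{W}$, which is the same datum as a point of that mapping space up to contractible choice). A colimit indexed by an $\infty$-groupoid $K$ is a $K$-indexed copower; writing the copower as $K\otimes(-)$, we have $\operatorname{colim}_K(X\circ i_V|_K)\simeq K\otimes X(W)$ and $\operatorname{colim}_K\ast\simeq K\otimes\ast$. The hypothesis that finite products in $D$ preserve colimits separately in each variable now gives the key identity $K\otimes X(W)\simeq(K\otimes\ast)\times X(W)$ — indeed $K\otimes(-)$ is a colimit, and $X(W)\times(-)$ preserves it, so $X(W)\times(K\otimes\ast)\simeq K\otimes(X(W)\times\ast)\simeq K\otimes X(W)$ — and one checks that under these identifications the comparison map is the equivalence just produced. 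Tracing through the identifications shows this equivalence is natural in $W$, which completes the proof.

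I expect the main obstacle to be the bookkeeping in the cofinality step: one must be careful that the full subcategory $K$ is taken inside the correct model for the comma $\infty$-category (the ``alternative'' under/over-categories of \cite[\S 4.2.1]{HTT}, consistent with Ntn. \ref{ntn:laxpb}), and that the under-categories appearing in Joyal's criterion are genuinely weakly contractible rather than merely connected. Everything after that — identifying $K$ with a mapping space, recognizing the colimit as a copower, and applying the distributivity hypothesis — is formal.
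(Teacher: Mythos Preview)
Your proposal is correct and follows essentially the same route as the paper: reduce to a pointwise statement at each $W$ via the left Kan extension formula, pass to the cofinal full subcategory $K$ of factorizations with second leg an equivalence, observe that $K$ is an $\infty$-groupoid, and conclude via $K\otimes X(W)\simeq(K\otimes\ast)\times X(W)$ using the distributivity hypothesis. The only difference is that you actually justify the cofinality of $K\hookrightarrow\mathcal{E}$ (via Joyal's criterion and the initial object $(U\to W,\id_W)$ in each undercategory), whereas the paper simply asserts it.
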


\begin{ntn} Suppose $T$ an $\infty$-category, and suppose $D$ a presentable $\infty$-category such that finite products in $D$ preserve colimits separately in each variable. Denote by $F_T(-, -)$ the internal mapping object in $D_T$, so that $F_T(X,-)$ is right adjoint to $X\times -$ for any object $X\in D_T$. Denote by $F_{D_T}(-,-)$ the internal mapping object in $D_T$.
\end{ntn}

\begin{prp}\label{} Suppose $T$ an $\infty$-category, and suppose $D$ a presentable $\infty$-category such that finite products in $D$ preserve colimits separately in each variable. For each object $V\in T$ and each pair $(X, Y)$ of $T$-objects of $D$, we have an equivalence
\[F_T(X, Y)(V) \simeq F_{D_{T_{/V}}}(i_V^{\star} X,i_V^{\star} Y).\]
\end{prp}
\begin{proof} Since $V$ is an initial object of $(T^\op)_{V/}$, the constant functor $\ast \colon T^\op \to D$ is left Kan extended from $\{V\}$, and we have
\[F_{D_{T_{/V}}}(\ast,i_V^{\star} Z) \simeq Z(V).\]
for $Z \in D_T$. So
\begin{align*}
F_T(X, Y)(V) & \simeq F_{D_{T_{/V}}}\left(\ast,i_V^{\star} F_T(X, Y)\right) \\
&\simeq F_{D_T}\left(i_{V,!}(\ast),F_T(X,Y)\right) \\
& \simeq F_{D_T}\left(X \times i_{V,!}(\ast), Y\right) \\
& \simeq F_{D_T}\left(i_{V,!}i_V^{\star}X, Y\right) & \text{(by \ref{muik})} \\
& \simeq F_{D_{T_{/V}}}\left(i_V^{\star}X,i_V^{\star}Y\right),
\end{align*}
and this completes the proof.
\end{proof}


\section{Parametrized fibrations} The universal property of $T$-objects in an $\infty$-category can be used to formulate an equivariant version of the usual straightening/unstraightening constructions.

\begin{nul} Observe that if $\fromto{C}{T^{\op}}$ is a $T$-$\infty$-category, and if $\fromto{D}{C}$ is cocartesian fibration, then $D$ automatically inherits the structure of a $T$-$\infty$-category.
\end{nul}

\begin{dfn} Suppose $T$ an $\infty$-category, and suppose $C$ and $D$ two $T$-$\infty$-categories. Then a \emph{cocartesian $T$-fibration} $\fromto{C}{D}$ is nothing more than a $T$-functor that is also a cocartesian fibration. Likewise, a \emph{left $T$-fibration} $\fromto{C}{D}$ is nothing more than a $T$-functor that is also a left fibration.
\end{dfn}

Combining Th. \ref{thm:univpropDG} with the usual straightening/unstraightening equivalence, we obtain
\begin{prp}\label{prp:Tstraighten} Suppose $T$ an $\infty$-category, and suppose $C$ a $T$-$\infty$-category. Then there are equivalences of $\infty$-categories
\[\Fun_{T^{\op}}(C,\underline{\Cat}_{\infty,T})\simeq\Cat^\cocart_{\infty,/C}\text{\quad and\quad}\Fun_{T^{\op}}(C,\underline{\Top}_{T})\simeq\Cat^{\textit{left}}_{\infty,/C},\]
where $\Cat^\cocart_{\infty,/C}$ denotes the subcategory of $\Cat_{\infty,/C}$ whose objects are cocartesian fibrations and whose morphisms presreve cocartesian edges, whereas $\Cat^{\textit{left}}_{\infty,/C}$ denotes the full subcategory of $\Cat_{\infty,/C}$spanned by the left fibrations.
\end{prp}

We may therefore speak of cocartesian $T$-fibrations over a $T$-$\infty$-category $C$ as being \emph{classified by} a $T$-functor $\fromto{C}{\underline{\Cat}_{\infty,T}}$ and, similarly, of left $T$-fibrations over a $T$-$\infty$-category $C$ as being \emph{classified by} a $T$-functor $\fromto{C}{\underline{\Top}_{T}}$.


\section{Parametrized functor categories} It is important to know that parametrized $\infty$-categories have an internal Hom. More precisely, one wishes to know that for any two $T$-$\infty$-categories $X$ and $Y$, there is a $T$-$\infty$-category $\underline{\Fun}_{T^{\op}}(X,Y)$ that enjoys the universal property
\[\Fun_{T^{\op}}(Z\ultimes X,Y)\simeq\Fun_{T^{\op}}(Z,\underline{\Fun}_{T^{\op}}(X,Y))\]
for any $T$-$\infty$-category $Z$.

\begin{rec} If $p\colon\fromto{X}{T^{\op}}$ and $q\colon\fromto{Y}{T^{\op}}$ are two $T$-$\infty$-categories that are classified by functors
\[\XX,\YY\colon\fromto{T^{\op}}{\Cat_\infty},\]
then it is easy to see that the product functor $\XX\times\YY$ classifies the cocartesian fibration
\[(p,q)\colon\fromto{X\times_{T^{\op}}Y}{T^{\op}}.\]
It's also apparent that the product admits a right adjoint; indeed, the functor $\infty$-category $\Fun(T^{\op},\Cat_\infty)$ admits an internal Hom given by the functor
\[\goesto{V}{\mathrm{Nat}(\HH_V\times\XX,\YY)},\]
where $\HH_V\colon\fromto{T^{\op}}{\Top\subset\Cat_\infty}$ is the functor represented by $s$.

What is less clear is how to write down the cocartesian fibration classified by this internal Hom functor in a manner that avoids straightening and unstraightening. It turns out that rather than use the $T$-space $\underline{V}$ as a model for the functor $\HH_V$, it is more convenient to use the ``alternative'' undercategory $(T^{\op})^{V/}$ of \cite[\S 4.2.1]{HTT}.

The forgetful functor $u\colon\fromto{(T^{\op})^{V/}}{T^{\op}}$ is still a left fibration that is classified by the corepresentable functor $\HH_V$, so we are led to construct a cocartesian fibration
\[\fromto{\underline{\Fun}_{T^{\op}}(X,Y)}{T^{\op}}\]
whose fiber over an object $V\in T^{\op}$ is the $\infty$-category of functors $\fromto{(T^{\op})^{V/}\ultimes X}{Y}$ over $T^{\op}$ that carry $(u,p)$-cocartesian edges to $q$-cocartesian edges. This is exactly what we will do.
\end{rec}

\begin{dfn}\label{dfn:internalHom} Now suppose $C$ and $D$ two $T$-$\infty$-categories. Define a simplicial set
\[\underline{\Fun}'_{T^{\op}}(C,D)\]
over $T^{\op}$ via the following universal property. For any map $\eta\colon\fromto{K}{T^{\op}}$, one demands a bijection
\[\Mor_{T^{\op}}(K,\underline{\Fun}'_{T^{\op}}(C,D))\cong\Mor_{K\downarrow_{T^{\op}}T^{\op}}(K\downarrow_{T^{\op}}C,K\downarrow_{T^{\op}}D),\]
natural in $\eta$. (Here we are using the notation of the lax pullback from Nt. \ref{ntn:laxpb}.)

Now let us define
\[\underline{\Fun}_{T^{\op}}(C,D)\subset\underline{\Fun}'_{T^{\op}}(C,D)\]
as the full subcategory spanned by those objects
\[\fromto{({T^{\op}})^{V/}\times_{T^{\op}}C\cong\{V\}\downarrow_{T^{\op}}C}{\{V\}\downarrow_{T^{\op}}D\cong {(T^{\op})}^{V/}\times_{T^{\op}}D}\]
that carry cocartesian edges over $({T^{\op}})^{V/}$ to cocartesian edges over $({T^{\op}})^{V/}$.
\end{dfn}

\begin{nul} We will soon show that $\underline{\Fun}_{T^{\op}}(C,D)$ is a $T$-$\infty$-category. The fibre of this $T$-$\infty$-category over any object $V\in T$ is by construction equivalent to the $\infty$-category of $\underline{V}^{\op}$-functors
\[\fromto{C\ultimes\underline{V}}{D\ultimes\underline{V}}\]
\end{nul}

\begin{ntn} If $\fromto{M}{S}$ and $\fromto{N}{S}$ are two maps of simplicial sets, and if $E\subset M_1$ and $F\subset N_1$ are collections of marked edges (which as usual we assume contain all degenerate edges), then we may write
\[(M,E)\downarrow_S(N,F)\coloneq (M,E)\underset{\Fun(\Delta^{\{0\}},S)^{\sharp}}{\times}\Fun(\Delta^1,S)^{\sharp}\underset{\Fun(\Delta^{\{1\}},S)^{\sharp}}{\times}(N,F).\]
\end{ntn}

\begin{nul} With this notation in hand, if $C$ and $D$ are two $T$-$\infty$-categories, we may describe $\fromto{\underline{\Fun}_{T^{\op}}(C,D)}{T^{\op}}$ via the following universal property: for any map $\fromto{K}{T^{\op}}$, we demand a bijection
\[\Mor_{/T^{\op}}(K,\underline{\Fun}_{T^{\op}}(C,D))\cong\Mor_{/T^{\op}}(K^{\flat}\downarrow_{T^{\op}}\leftnat{C},\leftnat{D}),\]
functorial in $\sigma$.
\end{nul}

\begin{prp}\label{prp:GFuncat} Suppose $C$ and $D$ two $T$-$\infty$-categories. Then the restriction
\[\underline{\Fun}_{T^{\op}}(C,D) \to T^{\op}\]
of the structure map above is a cocartesian fibration, and an edge $e$ is marked if and only if the corresponding map
\[\fromto{\Delta^1 \downarrow_{T^{\op}} C}{\Delta^1 \downarrow_{T^{\op}} D}\]
over $\Delta^1 \downarrow_{T^{\op}} T^{\op}$ carries cocartesian edges to cocartesian edges.
\begin{proof} Let us write $S\coloneq T^{\op}$. The map $r\colon\fromto{\underline{\Fun}_S(C,D)}{S}$ can be described as follows: for any map of simplicial sets $\fromto{K}{S}$, one has a bijection
\[\Mor_S(K,\underline{\Fun}_S(C,D))\cong\Mor_S(K^{\flat}\downarrow_S\leftnat{C},\leftnat{D}).\]

We see that the data of the source map
\[s\colon\fromto{S\downarrow_SC}{S},\]
the target map
\[t\colon\fromto{S\downarrow_SC}{S},\]
and the marking on $S^{\sharp}\downarrow_S\leftnat{C}$ together 
enjoy the conditions of \cite[Cor. 6.2.1]{BarShah}. Indeed, it is a trivial matter to see that equivalences of $S\downarrow_SC$ are marked and that marked edges are closed under composition. By \cite[Cor. 2.4.7.12]{HTT}, the source map $s$ is a cartesian fibration, and that a map is $s$-cartesian if and only if its projection to $C$ is an equivalence. Consequently, since the marked edges are those whose projection to $C$ is $p$-cocartesian, it follows that for any $2$-simplex
\begin{equation*}
\begin{tikzpicture}[baseline]
\matrix(m)[matrix of math nodes,
row sep=4ex, column sep=4ex,
text height=1.5ex, text depth=0.25ex]
{&y&\\
x&&z\\};
\path[>=stealth,->,font=\scriptsize]
(m-1-2) edge[inner sep=0.75pt] node[above right]{$\psi$} (m-2-3)
(m-2-1) edge[inner sep=0.75pt] node[above left]{$\phi$} (m-1-2)
edge node[below]{$\chi$} (m-2-3);
\end{tikzpicture}
\end{equation*}
in which $\psi$ is $s$-cartesian, the edge $\phi$ is marked just in case the edge $\chi$ is so. This proves that the conditions of \cite[Cor. 6.2.1]{BarShah} indeed apply.

As a consequence, we deduce that $r$ is a cocartesian fibration, and that an edge of $\underline{\Fun}_S(C,D)$ is $r$-cocartesian just in case the corresponding functor $\fromto{\Delta^1\downarrow_SD}{\Delta^1\downarrow_SD}$ over $\Delta^1\downarrow_SS$ carries cocartesian edges to cocartesian edges, as desired.
\end{proof}
\end{prp}

We now set about proving that $\underline{\Fun}_{T^{\op}}$ really is an internal hom.

\begin{thm}\label{thm:internalHom} Suppose $C$, $D$, and $E$ three $T$-$\infty$-categories.
\begin{enumerate}
\item There is a natural equivalence of $T$-$\infty$-categories
\[\equivto{\underline{\Fun}_{T^{\op}}(C,\underline{\Fun}_{T^{\op}}(D,E))}{\underline{\Fun}_{T^{\op}}(C \ultimes D, E)}.\]
\item Accordingly, there is a natural equivalence of $\infty$-categories
\[ \equivto{\Fun_{T^\op}(C,\underline{\Fun}_{T^\op}(D,E))}{\Fun_{T^\op}(C \ultimes D,E)}. \]
\end{enumerate}
\end{thm}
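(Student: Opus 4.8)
The plan is to prove (1) by a Yoneda-style argument: exhibit a natural bijection on the level of $T$-functors from an arbitrary test $T$-$\infty$-category $Z$ into both sides, then invoke Prop. \ref{prp:GFuncat} to know that both sides are genuine $T$-$\infty$-categories and hence that such a natural bijection forces an equivalence. Part (2) is then immediate: apply $\Fun_{T^\op}(C,-)$ — or rather the defining universal property of $\underline{\Fun}_{T^\op}$ with $Z = \ast_T = T^\op$ — since by Ex. \ref{exm:representable} (or directly from the universal property with $K = T^\op$) we have $\Fun_{T^\op}(T^\op, W) \simeq W$ for any $T$-$\infty$-category $W$, so (2) is just the underlying statement of (1) after mapping out of the terminal $T$-space.

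For (1), the key computation is to chase the universal property from Prop. \ref{prp:GFuncat}. Writing $S \coloneq T^\op$ throughout, for any map $\fromto{K}{S}$ we have
\[
\Mor_{/S}\bigl(K,\underline{\Fun}_{S}(C,\underline{\Fun}_{S}(D,E))\bigr) \cong \Mor_{/S}\bigl(K^\flat \downarrow_S \leftnat{C},\ \underline{\Fun}_S(D,E)^{\natural\text{-ish}}\bigr),
\]
and iterating the universal property once more identifies this with $\Mor_{/S}\bigl((K^\flat \downarrow_S \leftnat{C}) \downarrow_S \leftnat{D},\ \leftnat{E}\bigr)$ (with the marking on the first factor being the one that Prop. \ref{prp:GFuncat} describes, namely the edges whose projection to $C$ is $p$-cocartesian). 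On the other side, the universal property directly gives $\Mor_{/S}\bigl(K,\underline{\Fun}_S(C\ultimes D, E)\bigr) \cong \Mor_{/S}\bigl(K^\flat \downarrow_S (\leftnat{C}\times_{S^\sharp}\leftnat{D}),\ \leftnat{E}\bigr)$. So the whole theorem reduces to a natural isomorphism of marked simplicial sets over $S$
\[
(K^\flat \downarrow_S \leftnat{C}) \downarrow_S \leftnat{D} \;\cong\; K^\flat \downarrow_S (\leftnat{C}\ultimes_{S}\leftnat{D}),
\]
compatibly with the maps to $S$. Unwinding Nt. \ref{ntn:laxpb}, both sides are explicit iterated fiber products over copies of $\Fun(\Delta^1,S)$ and $\Fun(\Delta^{\{i\}},S)$; the left side involves a "composable pair of arrows" datum $f(k)\to c$, $c \to d$ in $S$ living over $K, C, D$, while the right side involves a single arrow $f(k) \to (c,d)$ with $(c,d)$ in the fiber product — and since $C$ and $D$ both lie over $S$, an arrow in $C\ultimes D$ projecting correctly is exactly a pair of arrows with matching projection to $S$, which is the data of the left side after composing. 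Establishing this isomorphism, and then checking that it matches the markings on both sides (cocartesian edges over $\Delta^1 \downarrow_S (C\ultimes D)$ versus the iterated marking), is the technical heart.

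I expect the main obstacle to be the bookkeeping around markings — verifying that the combinatorial isomorphism of underlying simplicial sets is an isomorphism of \emph{marked} simplicial sets, i.e. that an edge on the left is marked (its $C$-projection is $p$-cocartesian, \emph{and} after passing through the inner $\underline{\Fun}_S(D,E)$ layer the relevant $\Delta^1\downarrow_S D$-edges are cocartesian) precisely when the corresponding edge on the right is marked in the sense of Prop. \ref{prp:GFuncat}. This is where one uses that $C\ultimes D \to S$ has cocartesian edges given componentwise by cocartesian edges of $C$ and of $D$ (the remark opening \S\,"Parametrized functor categories"), so that a $p$-cocartesian edge in $C$ together with a $q$-cocartesian edge in $D$ is exactly a $(C\ultimes D)$-cocartesian edge. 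Granting the marked isomorphism, naturality in $K$ is formal, Prop. \ref{prp:GFuncat} supplies that both sides are cocartesian fibrations over $S$, and a Yoneda argument in $s\Set^+_{/S}$ upgrades the natural bijection of $\Mor_{/S}(K,-)$ sets to the asserted equivalence of $T$-$\infty$-categories, with part (2) following by setting $K$, i.e. $Z$, to be the terminal $T$-space.
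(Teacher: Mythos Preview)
Your reduction is on the right track, but the central claim --- that
\[
(K^\flat \downarrow_S \leftnat{C}) \downarrow_S \leftnat{D} \;\cong\; K^\flat \downarrow_S \leftnat{(C\ultimes D)}
\]
is an \emph{isomorphism} of marked simplicial sets --- is false, and this is where the actual work of the theorem lies. Unwinding the definitions, a vertex of the left-hand side consists of $k\in K$, $c\in C$, $d\in D$, together with \emph{two} arrows $\eta(k)\to p(c)$ and $p(c)\to q(d)$ in $S$; a vertex of the right-hand side consists of $k\in K$, a pair $(c,d)$ with $p(c)=q(d)$, and a \emph{single} arrow $\eta(k)\to p(c)$. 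These are not in bijection: the left side carries strictly more data, and ``composing'' does not furnish an inverse (composition is neither strict nor injective). What does exist is a natural map in the other direction, induced by the identity section $S\to\mathscr{O}(S)$, and the substance of the proof is that this map is a \emph{homotopy equivalence} in $s\Set^+_{/S}$.

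This is precisely the content of Lm.~\ref{lem:inflatingByArrowCat}(2): for a cocartesian fibration $C\to S$ and any marked $X\to S$, the map $\leftnat{C}\times_S X \to \leftnat{C}\downarrow_S X$ is a homotopy equivalence, and its proof rests on the strong pushforward (Pr.~\ref{prp:strongpushforward}). The paper applies this with $C$ replaced by $\mathscr{O}(S)\times_S C$ and $X=\leftnat{D}$, then checks that the extra markings coming from the leftmost $S^\sharp\downarrow_S(-)$ are preserved by the homotopy data. Because one only obtains a homotopy equivalence rather than an isomorphism, your Yoneda argument on the level of strict $\Mor_{/S}(K,-)$ sets will not go through as stated; the paper instead invokes \cite[Lm.~6.6, 6.7]{BarShah} to pass from the homotopy equivalence of representing objects to the desired equivalence of $T$-$\infty$-categories. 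Your deduction of (2) from (1) is fine, though the paper opts simply to rerun the argument with the outer $S^\sharp\downarrow_S(-)$ stripped away.
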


\noindent To prove this result, we will need a lemma.

\begin{lem}\label{lem:inflatingByArrowCat} Let $S$ be an $\infty$-category, $\iota\colon S \to \mathscr{O}(S)$ be the identity section and regard $\mathscr{O}(S)^\sharp$ as a marked simplicial set over $S$ via the target map. Then
\begin{enumerate}
\item For every marked simplicial set $X \to S$ and cartesian fibra{}tion $C \to S$,
\begin{equation*}
\id_X \times \iota \times \id_C: X \times_S \rightnat{C} \to X \downarrow_S \rightnat{C}
\end{equation*}
is a cocartesian equivalence in $s\Set^+_{/S}$.
\item For every marked simplicial set $X \to S$ and cocartesian fibration $C \to S$,
\begin{equation*}
\id_C \times \iota \times \id_X : \leftnat{C} \times_S X \to \leftnat{C} \downarrow_S X
\end{equation*}
is a homotopy equivalence in $s\Set^+_{/S}$.
\end{enumerate}
\begin{proof} For (1), using \cite[Cor. 6.2.1]{BarShah} on the span $S \ot \rightnat{C} \to S$, we reduce to the case where $C = S$. By definition, $X \to X \downarrow_SS^{\sharp}$ is a cocartesian equivalence if and only if for every cocartesian fibration $Z \to S$, the map
\[\Map^\sharp_S(X \downarrow_SS^{\sharp}, \leftnat{Z}) \to \Map^\sharp_S(X,\leftnat{Z})\]
is a trivial Kan fibration. In other words, for every monomorphism of simplicial sets $A \to B$ and cocartesian fibration $Z \to S$, we need to provide a dotted lift in the following commutative square
\begin{equation*}
\begin{tikzpicture}[baseline]
\matrix(m)[matrix of math nodes,
row sep=4ex, column sep=4ex,
text height=1.5ex, text depth=0.25ex]
 { (B^\sharp \times X) \cup^{(A^\sharp \times X)} ((A^\sharp \times X) \downarrow_S S^{\sharp}) & \leftnat{Z} \\
 (B^\sharp \times X) \downarrow_S S^{\sharp} & S^{\sharp}. \\ };
\path[>=stealth,->,font=\scriptsize]
(m-1-1) edge node[above]{$\phi$} (m-1-2)
edge (m-2-1)
(m-1-2) edge (m-2-2)
(m-2-1) edge (m-2-2)
edge[dotted] (m-1-2);
\end{tikzpicture}
\end{equation*}

Define $h_0\colon\mathscr{O}(S)^\sharp \times (\Delta^1)^\sharp \to \mathscr{O}(S)^\sharp$ as the adjoint to the map $\mathscr{O}(S)^\sharp \to \mathscr{O}(\mathscr{O}(S))^\sharp$ obtained by precomposing by the functor $\min\colon\Delta^1 \times \Delta^1 \to \Delta^1$, so that $(1,1)$ is the unique vertex sent to $1$. Precomposing $\phi$ by $\id_{A^\sharp \times X} \times h_0$, define a homotopy
\begin{equation*}
h\colon (A^\sharp \times X) \downarrow_S S^{\sharp} \times (\Delta^1)^\sharp \to \leftnat{Z}
\end{equation*}
from $\phi|_{A^\sharp \times X} \circ \pr_{A^\sharp \times X}$ to $\phi|_{(A^\sharp \times X) \downarrow_S S^\sharp}$. Using $h$ and $\phi|_{B^\sharp \times X}$, define a map
\begin{equation*}
\psi\colon (B^\sharp \times X) \cup^{(A^\sharp \times X)} ((A^\sharp \times X) \downarrow_S S^{\sharp}) \to \Fun((\Delta^1)^\sharp, \leftnat{Z})
\end{equation*}
such that $\psi|_{B^\sharp \times X}$ is adjoint to $\phi|_{B^\sharp \times X} \circ \pr_{B^\sharp \times X}$ and $\psi|_{(A^\sharp \times X) \downarrow_S S^{\sharp}}$ is adjoint to $h$. Then we may factor the above square through the trivial fibration $\Fun((\Delta^1)^\sharp, \leftnat{Z}) \to \leftnat{Z} \downarrow_S S^{\sharp}$ of Lm. \ref{push} to obtain the commutative rectangle
\begin{equation*}
\begin{tikzpicture}[baseline]
\matrix(m)[matrix of math nodes,
row sep=4ex, column sep=4ex,
text height=1.5ex, text depth=0.25ex]
 { (B^\sharp \times X) \cup^{(A^\sharp \times X)} ((A^\sharp \times X) \downarrow_S S^{\sharp}) & \Fun((\Delta^1)^\sharp, \leftnat{Z}) &  \leftnat{Z} \\
 (B^\sharp \times X) \downarrow_S S^{\sharp} & \leftnat{Z} \downarrow_S S^{\sharp} & S^\sharp \\ };
\path[>=stealth,->,font=\scriptsize]
(m-1-1) edge node[above]{$\psi$} (m-1-2)
edge (m-2-1)
(m-1-2) edge node[above]{$e_1$} (m-1-3)
edge[->>] node[left]{$\sim$} (m-2-2)
(m-1-3) edge (m-2-3)
(m-2-1) edge node[below]{$\phi|_{B^\sharp \times X} \times \id$} (m-2-2)
edge[dotted] node[below]{$\widetilde{\psi}$} (m-1-2)
(m-2-2) edge node[below]{$e_1$} (m-2-3);
\end{tikzpicture}
\end{equation*}
The dotted lift $\widetilde{\psi}$ exists, and $e_1 \circ \widetilde{\psi}$ is our desired lift.

The latter statement follows readily from the proof of Lm. \ref{lem:Ppinverseofiota}, once one notes that the homotopy inverse and the relevant homotopies all respect the markings in (2).
\end{proof}
\end{lem}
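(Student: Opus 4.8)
The plan is to handle the two parts by different mechanisms: part (1) by reducing to a concrete lifting problem against cocartesian-fibrant objects, and part (2) by importing the strong pushforward from Lm.~\ref{lem:Ppinverseofiota}. For part (1), I would first reduce to the case $C = S$, i.e.\ $\rightnat{C} = S^\sharp$. Both $X \times_S \rightnat{C}$ and $X \downarrow_S \rightnat{C}$ arise from the corresponding constructions with $S^\sharp$ in place of $\rightnat{C}$ by pulling back along the target evaluation $\rightnat{C} \to S^\sharp$, so applying \cite[Cor.~6.2.1]{BarShah} to the span $S \leftarrow \rightnat{C} \to S$ lets me replace $C$ by $S$. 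It then suffices to show that $\iota\colon\fromto{X}{X \downarrow_S S^\sharp}$ is a cocartesian equivalence, which by the usual characterization means that $\Map^\sharp_S(X \downarrow_S S^\sharp, \leftnat{Z}) \to \Map^\sharp_S(X, \leftnat{Z})$ is a trivial Kan fibration for every cocartesian fibration $Z \to S$. Unwinding through the marked pushout--product reduces, for each monomorphism $A \to B$, to filling the square with top edge
\[(B^\sharp \times X) \cup^{(A^\sharp \times X)} \bigl((A^\sharp \times X) \downarrow_S S^\sharp\bigr) \to \leftnat{Z}\]
over $(B^\sharp \times X) \downarrow_S S^\sharp \to S^\sharp$.

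The device that produces the lift is a contracting homotopy on the arrow category. I would build $h_0\colon\fromto{\mathscr{O}(S)^\sharp \times (\Delta^1)^\sharp}{\mathscr{O}(S)^\sharp}$ as the adjoint of the precomposition-by-$\min$ map $\fromto{\mathscr{O}(S)}{\mathscr{O}(\mathscr{O}(S))}$, so that $h_0$ deforms each arrow $g$ from the constant identity at its source $g(0)$ to $g$ itself, fixing the source but sliding the target along $g$. Precomposing the given $\phi$ with $\id \times h_0$ yields a homotopy $h$ on $(A^\sharp \times X) \downarrow_S S^\sharp$ between $\phi$ transported back to the identity section and $\phi$ on the lax part; gluing $h$ to the constant path at $\phi|_{B^\sharp \times X}$ gives a map $\psi$ into the marked path object $\Fun((\Delta^1)^\sharp, \leftnat{Z})$. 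Now the key input is Lm.~\ref{push} in marked form: the trivial fibration $\fromto{\Fun((\Delta^1)^\sharp, \leftnat{Z})}{\leftnat{Z} \downarrow_S S^\sharp}$ lets me factor the original square, producing a lift $\widetilde\psi$, and evaluating at the target endpoint, $e_1 \circ \widetilde\psi$ is the desired filler.

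Part (2) I would deduce with far less effort. The map $\fromto{\leftnat{C} \times_S X}{\leftnat{C} \downarrow_S X}$ is the pullback along the target evaluation $X \to S^\sharp$ of the identity-insertion $\fromto{\leftnat{C}}{\leftnat{C} \downarrow_S S^\sharp}$, which is exactly the strong-pushforward map of Pr.~\ref{prp:strongpushforward} with $C$ in the role of the fibration. The proof of Lm.~\ref{lem:Ppinverseofiota} produces an explicit marked homotopy inverse $P_p$ together with the two marked homotopies witnessing $P_p \circ \iota \sim \id$ and $\iota \circ P_p \sim \id$; since these are all marked maps compatible with pullback along $X$, they survive the enlargement of the target slot and give an honest homotopy equivalence, which is strictly stronger than the cocartesian equivalence of part (1).

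I expect the genuine obstacle to be the lift in part (1), and the reason is precisely the contrast with part (2): the contracting homotopy $h_0$ is \emph{not} fibrewise over $S$, since it moves the target of each arrow, so $\phi$ cannot simply be transported back by hand. Lm.~\ref{push} is what rescues the argument, functorially supplying the cocartesian edges of $Z$ needed to realize this non-fibrewise deformation inside $\leftnat{Z}$; the delicate bookkeeping is checking that $\psi$ really lands in the \emph{marked} path object and that the gluing of $h$ with the constant path at $\phi|_{B^\sharp \times X}$ is consistent over the overlap $A^\sharp \times X$.
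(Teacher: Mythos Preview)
Your proposal is correct and follows essentially the same route as the paper: the reduction to $C=S$ via \cite[Cor.~6.2.1]{BarShah}, the contracting homotopy $h_0$ built from $\min$, the gluing into $\Fun((\Delta^1)^\sharp,\leftnat{Z})$, and the factoring through the trivial fibration of Lm.~\ref{push} are all exactly what the paper does, and your treatment of part (2) via the strong pushforward of Lm.~\ref{lem:Ppinverseofiota} matches the paper's one-line deduction. The only imprecision is cosmetic: the map of Pr.~\ref{prp:strongpushforward} has source $C^\sim$ and uses $\rightnat{\mathscr{O}(S)}$ rather than $\leftnat{C}$ and $\mathscr{O}(S)^\sharp$, so it is not literally the same marked map, but you already note that the homotopies respect the enlarged markings, which is precisely the paper's caveat.
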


Armed with this, the result is no trouble to prove.
\begin{proof}[Proof of Th. \ref{thm:internalHom}] Set $S \coloneq T^\op$. For (1), consider the commutative diagram
\[ \begin{tikzpicture}[baseline]
\matrix(m)[matrix of math nodes,
row sep=6ex, column sep=4ex,
text height=1.5ex, text depth=0.25ex]
 { S^\sharp \downarrow_S \leftnat{(C \ultimes D)} \\
 & S^\sharp \downarrow_S \leftnat{C} \downarrow_S \leftnat{D} & S^\sharp \downarrow_S \leftnat{D} & S^\sharp \\
   & S^\sharp \downarrow_S \leftnat{C} & S^\sharp \\
   & S^\sharp \\ };
\path[>=stealth,->,font=\scriptsize]
(m-1-1) edge node[below]{$f$} (m-2-2)
edge (m-4-2)
edge (m-2-4)
(m-2-2) edge (m-2-3)
edge (m-3-2)
(m-2-3) edge (m-2-4)
edge (m-3-3)
(m-3-2) edge (m-3-3)
edge (m-4-2);
\end{tikzpicture} \]
where the square is a pullback square. By Lm. \ref{lem:inflatingByArrowCat}(2), the map 
\[ \leftnat{(\mathscr{O}(S) \times_S C)} \times_S \leftnat{D} \to \leftnat{(\mathscr{O}(S) \times_S C)} \downarrow_S \leftnat{D}\]
is a homotopy equivalence in $s\Set^+_{/S}$ (via the target map to $S$), hence so is $f$, because the additional marked edges do not obstruct the data of the homotopy equivalence. Invoking \cite[Lm. 6.6]{BarShah} and \cite[Lm. 6.7]{BarShah}, we conclude. For (2), we may simply repeat the same argument with the leftmost copy of $S$ removed.
\end{proof}

\begin{exm}\label{exm:Tpresheaves} One of the most important examples, not surprisingly, is the \emph{$T$-presheaf} $T$-$\infty$-category
\[\underline{\PP}(C)\coloneq\underline{\Fun}_{T^{\op}}(C^{\vop},\underline{\Top}_T),\]
defined for any $T$-$\infty$-category $C$. In the next section, we will provide a Yoneda embedding, and we will show that it is fully faithful.
\end{exm}

\begin{wrn} The construction $\underline{\Fun}_{T^\op}(-,-)$ does not make homotopical sense when the first variable is not fibrant, so it does not yield a Quillen bifunctor on $s\Set^+_{/T^\op}$. Nevertheless, we can say the following about varying the first variable.
\end{wrn}

\begin{prp} Let $C$, $D$, and $E$ be $T$-$\infty$-categories, and let $F\colon C \to D$ be a $T$-functor. Denote by
\[ F^{\star}\colon \underline{\Fun}_{T^\op}(D,E) \to \underline{\Fun}_{T^\op}(C,E) \]
denote the $T$-functor induced by $F$.
\begin{enumerate}
\item If $F$ is an equivalence, then so is $F^{\star}$.
\item If $F$ is a cofibration, then $F^{\star}$ is a fibration in $s\Set^+_{/S}$.
\end{enumerate}
\begin{proof} Let $S = T^\op$. For (1), since $F\colon \leftnat{C} \to \leftnat{D}$ is a homotopy equivalence in $s\Set^+_{/S}$, so is $S^{\sharp}\downarrow_S \leftnat{C} \to S^{\sharp} \downarrow_S \leftnat{D}$. Thus it follows from \cite[Lm. 6.7]{BarShah} that $F^{\star}$ is an equivalence.

For (2), we need to verify that for any trivial cofibration $A \to B$ in $s\Set^+_{/S}$, the map
\[ (A \downarrow_S \leftnat{D}) \cup^{(A \downarrow_S \leftnat{C})} (B \downarrow_S \leftnat{C}) \to B \downarrow_S \leftnat{D} \]
is a trivial cofibration in $s\Set^+_{/S}$. By the proof of Pr. \ref{prp:GFuncat}, $- \downarrow_S \leftnat{C}$ preserves trivial cofibrations and ditto for $\leftnat{D}$. The result then follows.
\end{proof}
\end{prp}


\section{The parametrized Yoneda embedding} In \cite[\S 5]{BGN}, we have already constructed a parametrized form of the twisted arrow $\infty$-category of a $T$-$\infty$-category. Let us recall elements of this construction.

\begin{rec} Suppose $T$ an $\infty$-category, and suppose $(C,p)$ a $T$-$\infty$-category. In \cite[5.5]{BGN} we define $\widetilde{\mathscr{O}}(C/T^{\op})$ so that the objects are morphisms $f\colon\fromto{u}{v}$ of $C$ such that $p(f)$ is an identity morphism in $T$, and a morphism $\fromto{f}{g}$ is a commutative diagram
\begin{equation*}
\begin{tikzpicture}[baseline]
\matrix(m)[matrix of math nodes, 
row sep=3ex, column sep=4ex, 
text height=1.5ex, text depth=0.25ex] 
{u&&x\\[-4ex]
&w&\\ 
v&&y\\}; 
\path[>=stealth,->,font=\scriptsize] 
(m-1-3) edge node[above]{$\psi$} (m-2-2)
edge node[right]{$g$} (m-3-3)
(m-1-1) edge node[above]{$\phi$} (m-2-2) 
edge node[left]{$f$} (m-3-1) 
(m-2-2) edge (m-3-3) 
(m-3-1) edge node[below]{$\xi$} (m-3-3); 
\end{tikzpicture}
\end{equation*}
in which $\phi$ is $p$-cocartesian, and $p(\psi)$ is an identity morphism. Composition is performed by forming the pushout along the top and simple composition along the bottom.

The functor
\[M\colon\fromto{\widetilde{\mathscr{O}}(C/T^{\op})}{C^{\vop}\ultimes C}\]
will carry an object $f\in\widetilde{\mathscr{O}}(C/T^{\op})$ as above to the pair of objects $(u,v)\in C^{\vop}\ultimes C$, and it will carry a morphism $\fromto{f}{g}$ as above to the pair of morphisms
\begin{equation*}
\left(\begin{tikzpicture}[baseline]
\matrix(m)[matrix of math nodes, 
row sep=2ex, column sep=3ex, 
text height=1.5ex, text depth=0.25ex] 
{&w&\\ 
u&&x\\}; 
\path[>=stealth,<-,inner sep=0.9pt,font=\scriptsize] 
(m-1-2) edge node[above left]{$\phi$} (m-2-1) 
edge node[above right]{$\psi$} (m-2-3); 
\end{tikzpicture}
,\ v\ \tikz[baseline]\draw[>=stealth,->,font=\scriptsize](0,0.5ex)--node[above]{$\xi$}(0.5,0.5ex);\ y\right)\in C^{\vop}\ultimes C.
\end{equation*}
We demonstrate that $M$ is a left $T$-fibration.
\end{rec}

\begin{dfn} In light of Pr. \ref{prp:Tstraighten}, the left $T$-fibration $M$ is classified by a $T$-functor
\[\underline{\Mor}_C\colon\fromto{C^{\vop}\ultimes C}{\underline{\Top}_T},\]
and therefore by Th. \ref{thm:internalHom} also a $T$-functor
\[j\colon\fromto{C}{\underline{\PP}(C)},\]
called the \emph{parametrized Yoneda embedding.}
\end{dfn}

In order to justify this bit of terminology, we should actually demonstrate that $j$ is, in fact, fully faithful. To verify this, we work fibrewise: for any object $V\in T$, we look at the functor on fibers
\[j_V\colon\fromto{C_V}{\Fun_{\underline{V}}(C^{\vop}\ultimes \underline{V}, \underline{\Top}_{\underline{V}^{\op}})}.\]
Our universal property of $T$-spaces (Th. \ref{thm:univpropDG}) now permits one to regard the target of $j_V$ as the $\infty$-category $\Fun(C^{\vop}\times_{T^{\op}} \underline{V}, \Top)$. Unwinding the definitions above, it is easy to see that $j_V$ factors up to homotopy as the inclusion $\into{C_V}{(C^{\vop}\times_{T^{\op}}\underline{V})^{\op}}$ followed by Yoneda embedding
\[\into{(C^{\vop}\times_{T^{\op}}\underline{V})^{\op}}{\Fun(C^{\vop}\times_{T^{\op}} \underline{V}, \Top)}.\]
This already proves that $j_V$ is fully faithful, but in fact we also can conclude the following.
\begin{prp} Given $T$, $C$, and $V$ as above, for any object $X\in C_V$, and for any $T$-functor
\[F\colon\fromto{C^{\vop}\ultimes \underline{V}}{\underline{\Top}_{\underline{V}}}\]
we have an equivalence
\[
F(X)\simeq\Map_{\underline{\Fun}_{\underline{V}}(C^{\vop}\ultimes\underline{V},\underline{\Top}_{\underline{V}^{\op}})}(j_V(X),F).
\]
\end{prp}
\noindent In any case, we immediately deduce the following.
\begin{thm}[Parametrized Yoneda lemma] For any $\infty$-cate\-gory $T$, and for any $T$-$\infty$-cate\-gory $C$, the parametrized Yoneda embedding
\[j\colon\fromto{C}{\underline{\PP}(C)}\]
is fully faithful.
\end{thm}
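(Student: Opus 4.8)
The plan is to prove full faithfulness fibrewise, using the definition that a $T$-functor is fully faithful exactly when it restricts to a fully faithful functor on each fibre. So I would fix $V \in T$ and show that $j_V \colon C_V \to \underline{\PP}(C)_V$ is fully faithful. First I would identify the target: by the construction of the internal hom (Df. \ref{dfn:internalHom}) and the description of its fibres in Pr. \ref{prp:GFuncat}, $\underline{\PP}(C)_V$ is the $\infty$-category $\Fun_{\underline{V}}(C^{\vop}\ultimes\underline{V},\underline{\Top}_{\underline{V}^{\op}})$ of $\underline{V}^{\op}$-functors, and applying Th. \ref{thm:univpropDG} with $\underline{V}^{\op}$ (equivalently $T_{/V}$) in place of $T$ and $D=\Top$ rewrites this as the ordinary presheaf $\infty$-category $\Fun(C^{\vop}\times_{T^{\op}}\underline{V},\Top)$.

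Next I would identify $j_V$ under these equivalences. Unwinding the construction of $j$ --- the left $T$-fibration $M\colon\widetilde{\mathscr{O}}(C/T^{\op})\to C^{\vop}\ultimes C$ recalled from \cite[\S 5]{BGN}, the $T$-functor $\underline{\Mor}_C$ classifying it via Pr. \ref{prp:Tstraighten}, and the tensor--hom adjunction of Th. \ref{thm:internalHom} --- and restricting to the fibre over $V$, one finds that $j_V$ is homotopic to the composite of the inclusion
\[ C_V \hookrightarrow (C^{\vop}\times_{T^{\op}}\underline{V})^{\op} \]
with the ordinary Yoneda embedding $(C^{\vop}\times_{T^{\op}}\underline{V})^{\op}\hookrightarrow\Fun(C^{\vop}\times_{T^{\op}}\underline{V},\Top)$. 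The first map is, after taking opposites, the inclusion of the fibre of the cocartesian fibration $C^{\vop}\times_{T^{\op}}\underline{V}\to\underline{V}$ over the initial object $\mathrm{id}_V$ of $\underline{V}=(T^{\op})_{V/}$, which is fully faithful because the inclusion of the fibre over an initial object of the base of a cocartesian fibration is fully faithful; the second is fully faithful by the classical Yoneda lemma \cite[\S 5.1.3]{HTT}. Hence $j_V$ is fully faithful for every $V$, and the theorem follows. In fact this identification is precisely what is needed for the sharper corepresentability statement of the preceding Proposition, and the theorem is an immediate consequence of it.

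The hard part will be the identification in the second step: one has to carry a point $X\in C_V$ through the whole chain of equivalences --- classify $M$, apply the adjunction of Th. \ref{thm:internalHom}, restrict to the fibre over $V$, and apply Th. \ref{thm:univpropDG} --- and check that the resulting functor lands on the presheaf corepresented by $X$. Concretely this comes down to identifying the restriction of $\underline{\Mor}_C$ to $\{X\}\ultimes(C^{\vop}\ultimes\underline{V})$ with the parametrized mapping-space functor, which is exactly the content of the twisted-arrow construction of \cite[\S 5]{BGN}; the remaining bookkeeping with lax pullbacks and marked simplicial sets is routine.
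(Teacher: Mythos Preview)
Your proposal is correct and follows essentially the same approach as the paper: work fibrewise, identify the fibre of $\underline{\PP}(C)$ over $V$ with $\Fun(C^{\vop}\times_{T^{\op}}\underline{V},\Top)$ via Th.~\ref{thm:univpropDG}, and then factor $j_V$ as the fully faithful inclusion $C_V\hookrightarrow(C^{\vop}\times_{T^{\op}}\underline{V})^{\op}$ followed by the ordinary Yoneda embedding. You supply more justification than the paper does (which simply asserts that ``it is easy to see'' the factorization), including the useful observation that the first inclusion is fully faithful because it is the fibre over an initial object of a cocartesian fibration.
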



\appendix

\section{Notational glossary}
\begin{longtable}{lp{0.8\linewidth}}
$C^\sim$ & the marked simplicial set $(C,iC)$ attached to an $\infty$-category $C$ in which $iC\subset C_1$ is the class of equivalences; equivalently, $C^{\sim}=\rightnat{C}=\leftnat{C}$ for the cartesian and cocartesian fibration $\fromto{C}{\Delta^0}$.\\
$C^{\vop}$ & the \emph{opposite $T$-$\infty$-category} $C^{\vop}=(C^{\op})^{\vee}=({}_\vee C)^{\op}$ of a $T$-$\infty$-category $C$; see Df. \ref{dfn:opposite}.\\
$\Cat_{\infty}$ & the $\infty$-category of $\infty$-categories, i.e., the simplicial nerve of the simplicial category of fibrant marked simplicial sets; see \cite[Df. 3.0.0.1]{HTT}.\\
$\Cat_{\infty,T}$ & the $\infty$-category of $T$-$\infty$-categories, i.e., the simplicial nerve of the simplicial category of fibrant marked simplicial sets over $T^{\op}$ in the cocartesian model structure; see Nt. \ref{ntn:CatinftyT}.\\
$\FF_G$ & the nerve of the ordinary category of all finite, continuous $G$-sets and $G$-equivariant maps; see Nt. \ref{ntn:finiteTsets}.\\
$\FF_T$ & the \emph{$\infty$-category of finite $T$-sets}, i.e., the full subcategory of $\Fun(T^{\op},\Top)$ spanned by the finite corpoducts of representables; see Nt. \ref{ntn:finiteTsets}.\\
$\underline{\FF}_T$ & the \emph{$T$-$\infty$-category of finite $T$-sets}; see Ex. \ref{exm:tCatoffiniteTsets}.\\
$\Fun_{T^{\op}}$ & the $\infty$-category of functors between two $\infty$-categories; see Nt. \ref{ntn:CatinftyT}.\\
$\underline{\widetilde{\Fun}}_{T^{\op}}$ & the $T$-$\infty$-category of functors from the left dual of a $T^{\op}$-$\infty$-category to a $T$-$\infty$-category; see Cnstr. \ref{cor32213}.\\
$\underline{\Fun}_{T^{\op}}$ & the $T$-$\infty$-category of functors between two $T$-$\infty$-categories; see Df. \ref{dfn:internalHom}.\\
$G$ & a profinite group.\\
$\Map_{T^{\op}}$ & the \emph{space of $T$-functors} between two $T$-$\infty$-categories; see Nt. \ref{ntn:CatinftyT}.\\
$M\downarrow_S N$ & the \emph{lax pullback} of two maps $\fromto{M}{S}$ and $\fromto{N}{S}$; see Nt. \ref{ntn:laxpb}.\\
$\mathscr{O}(C)$ & the \emph{arrow $\infty$-category} $\Fun(\Delta^1,C)$ of an $\infty$-category $C$.\\
$\widetilde{\mathscr{O}}(C)$ & the \emph{twisted arrow $\infty$-category} of an $\infty$-category $C$, so that the $n$-simplices of $\widetilde{\mathscr{O}}(C)$ are functors $\fromto{(\Delta^n)^\op\star\Delta^n}{C}$; see \cite[Nt. 2.3]{M1}, \cite[Df. 2.1]{BGN}, and \cite[\S 1]{Aefffib}.\\
$\OO_G$ & the full subcategory of $\FF_{G}$ spanned by the finite \emph{$G$-orbits}, i.e., the finite transitive (continuous) $G$-sets; see Df. \ref{dfn:Gcat}.\\
$\categ{Orbit}(U)$ & the set of transitive $G$-subsets $W\subset U$ of a $G$-set $U$, or, more generally, the set of representable summands of a finite $T$-set $U$; see Nt. \ref{ntn:finiteTsets}.\\
$\underline{\PP}(C)$ & the $T$-$\infty$-category of $T$-presheaves on a $T$-$\infty$-category $C$; see Ex. \ref{exm:Tpresheaves}.\\
$\Top$ & the $\infty$-category of spaces, i.e., the $\infty$-category freely generated under colimits by one object, or, equivalently, the simplicial nerve of the fibrant simplicial category of Kan complexes, or, again equivalently, the full subcategory of $\Cat_{\infty}$ spanned by those objects in which every edge is marked; see \cite[Df. 1.2.16.1]{HTT}.\\
$\underline{U}$ & the \emph{discrete $T$-space} attached to a finite $T$-set $U$; see Ex. \ref{exm:representable} and Ex. \ref{discreteGcat}.\\
$\rightnat{X}$ & the marked simplicial set $(X,i^SX)$ attached to a cartesian fibration $p\colon\fromto{X}{S}$ in which $i^SX\subset X_1$ is the class of $p$-cartesian edges; see \cite[Df. 3.1.1.8]{HTT}.\\
$X^{\vee}$ & the \emph{right dual cocartesian fibration} $p^{\vee}\colon\fromto{X^{\vee}}{S^{\op}}$ of a cartesian fibration $p\colon\fromto{X}{S}$, i.e., the simplicial set whose $n$-simplices are functors $x\colon\fromto{\widetilde{\mathscr{O}}(\Delta^n)^{\op}}{X}$ such that any morphism of the form $\fromto{x_{ik}}{x_{jk}}$ lies over an identity in $S$, and any morphism of the form $\fromto{x_{ij}}{x_{ik}}$ is $p$-cartesian; see \cite{BGN} and \cite[Cnstr. 2.6]{Aefffib}.\\
$\leftnat{Y}$ & the marked simplicial set $(Y,i_TY)$ attached to a cocartesian fibration $q\colon\fromto{Y}{T}$ in which $i_TY\subset Y_1$ is the class of $q$-cocartesian edges; see \cite[Df. 3.1.1.8]{HTT}.\\
${}_{\vee}Y$ & the \emph{dual cartesian fibration} ${}_{\vee}q\coloneq((q^{\op})^{\vee})^{\op}\colon\fromto{{}_{\vee}Y\coloneq((X^{\op})^{\vee})^{\op}}{T^{\op}}$ of a cocartesian fibration $q\colon\fromto{Y}{T}$; see \cite{BGN} and \cite[Cnstr. 2.6]{Aefffib}.\\
\end{longtable}


\bibliographystyle{amsplain}
\bibliography{Gcats}

\end{document}